\numberwithin{equation}{section}
\numberwithin{figure}{section}
\newtheorem{theorem}{Theorem}[section]
\newtheorem{lemma}[theorem]{Lemma}
\newtheorem{corollary}[theorem]{Corollary}
\theoremstyle{definition}
\newtheorem{definition}[theorem]{Definition}
\theoremstyle{remark}
\newtheorem{remark}[theorem]{Remark}
\newtheorem{notation}[theorem]{Notation}
\def\Xint#1{\mathchoice
{\XXint\displaystyle\textstyle{#1}}
{\XXint\textstyle\scriptstyle{#1}}
{\XXint\scriptstyle\scriptscriptstyle{#1}}
{\XXint\scriptscriptstyle\scriptscriptstyle{#1}}
\!\int}
\def\XXint#1#2#3{{\setbox0=\hbox{$#1{#2#3}{\int}$ }
\vcenter{\hbox{$#2#3$ }}\kern-.6\wd0}}
\def\dashint{\Xint-}
\begin{document}

\title[$p(x)$-Laplacian PDEs with Discontinuous Coefficients]{Partial Regularity of Solutions to $\bm{p(x)}$-Laplacian PDEs with Discontinuous Coefficients}

\author[C. S. Goodrich]{Christopher S. Goodrich}
\address{School of Mathematics and Statistics\\
University of New South Wales\\
Sydney, NSW 2052 Australia}
\email[Christopher S. Goodrich]{cgoodrich@creightonprep.org; c.goodrich@unsw.edu.au}
\author[M. A. Ragusa]{M. Alessandra Ragusa}
\address{Dipartimento di Matematica e Informatica\\
Universita di Catania\\
Catania, Italy}
\email[Alessandra Ragusa]{maragusa@dmi.unict.it}
\author[A. Scapellato]{Andrea Scapellato}
\address{Dipartimento di Matematica e Informatica\\
Universita di Catania\\
Catania, Italy}
\email[Andrea Scapellato]{scapellato@dmi.unict.it}
\keywords{Partial regularity, H\"{o}lder continuity, Sobolev coefficient, discontinuous coefficient, $p(x)$-Laplacian system.}
\subjclass[2010]{Primary: 35B65.  Secondary: 46E35, 49N60.}

\begin{abstract}

For $\Omega\subseteq\mathbb{R}^{n}$ an open and bounded region we consider solutions $u\in W_{\text{loc}}^{1,p(x)}\big(\Omega;\mathbb{R}^{N}\big)$, with $N>1$, of the $p(x)$-Laplacian system
\begin{equation}
\nabla\cdot\left(a(x)|Du|^{p(x)-2}Du\right)=0\text{, a.e. }x\in\Omega,\notag
\end{equation}
where concerning the coefficient function $x\mapsto a(x)$ we assume only that
\begin{equation}
a\in W^{1,q}(\Omega)\cap L^{\infty}(\Omega),\notag
\end{equation}
where $q>1$ is essentially arbitrary.  This implies that the coefficient in the PDE can be highly irregular, and yet in spite of this we still recover that
\begin{equation}
u\in\mathscr{C}_{\text{loc}}^{0,\alpha}\big(\Omega_0\big),\notag
\end{equation}
for each $0<\alpha<1$, where $\Omega_0\subseteq\Omega$ is a set of full measure.  Due to the variational methodology that we employ, our results apply to the more general question of the regularity of the integral functional
\begin{equation}
\int_{\Omega}a(x)|Du|^{p(x)}\ dx.\notag
\end{equation}

\end{abstract}

\maketitle

\section{Introduction}

In this paper we consider the partial regularity of weak solutions $u\ : \ \Omega\subseteq\mathbb{R}^{n}\rightarrow\mathbb{R}^{N}$, where $\Omega$ is bounded and open, of the $p(x)$-Laplacian PDE system
\begin{equation}\label{eq1.1}
\nabla\cdot\left(a(x)|Du|^{p(x)-2}Du\right)=0\text{, }x\in\Omega.
\end{equation}
Because our approach is variational we actually recover the regularity of solutions to \eqref{eq1.1} as a corollary to a more general result -- namely, the partial regularity of minimizers of the functional
\begin{equation}\label{eq1.2}
\int_{\Omega}a(x)|Du|^{p(x)}\ dx.
\end{equation}
As with \eqref{eq1.1}, we consider \eqref{eq1.2} in the \emph{vectorial} setting.  The principal novelty of this work is that the coefficient function $x\mapsto a(x)$ appearing in both \eqref{eq1.1} and \eqref{eq1.2} is not assumed to be continuous.  In fact, we assume only that it satisfies
\begin{equation}\label{eq1.3}
a\in W_{\text{loc}}^{1,q}(\Omega)\cap L^{\infty}(\Omega),
\end{equation}
where $q>1$ is essentially arbitrary.  Thus, the coefficient of $|Du|$ can be quite irregular, and yet we are still able to obtain the almost everywhere H\"{o}lder regularity of $u$.  More precisely, for each $0<\alpha<1$ we demonstrate that there exists an open set $\Omega_0\subseteq\Omega$ such that
\begin{equation}
u\in\mathscr{C}_{\text{loc}}^{0,\alpha}\big(\Omega_0\big),\notag
\end{equation}
where $\Omega_0$ is of full measure in the sense that
\begin{equation}
\big|\Omega\setminus\Omega_0\big|=0.\notag
\end{equation}
In fact, as we demonstrate in this paper the precise structure of the singular set $\Omega\setminus\Omega_0$ is intimately connected with the set of irregular points for the gradient, $Da$, of the coeffcient function -- that is, the set
\begin{equation}
\left\{x\in\Omega\ : \ \liminf_{R\to0^+}R^q\dashint_{\mathcal{B}_R}|Da|^q\ dx>\varepsilon_0\right\},\notag
\end{equation}
for some number $\varepsilon_0>0$ to be described in the proof in Section 3.

In a recent paper Goodrich \cite{goodrich5} considered solutions $u\in W^{1,p}(\Omega)$ of the $p$-Laplacian system
\begin{equation}\label{eq1.4}
\nabla\cdot\Big(a(x)|Du|^{p-2}Du\Big)=0\text{, }x\in\Omega,
\end{equation}
and, correspondingly, minimizers of the functional
\begin{equation}
\int_{\Omega}f\big(a(x)|Du|\big)\ dx,\notag
\end{equation}
where $f$ was of class $\mathscr{C}^{2}(\mathbb{R})$, uniformly $p$-convex, and satisfying a standard $p$-growth condition.  (We note that in \cite{goodrich5} there was a misstatement that $f\in\mathscr{C}^{1}(\mathbb{R})$, but, in fact, it should be $f\in\mathscr{C}^{2}(\mathbb{R})$.)  In each case, it was demonstrated that even with the very weak condition \eqref{eq1.3} in force, both a weak solution of the PDE and a minimizer of the function must be of class $\mathscr{C}_{\text{loc}}^{0,\alpha}$, for each $0<\alpha<1$, a.e. on $\Omega$.  At those points where this regularity failed to hold, that is, the singular set, it was shown that this singular set must be a subset of the set
\begin{equation}
\left\{x\in\Omega\ : \ \liminf_{R\to0^+}
R^q\dashint_{\mathcal{B}_R}|Da|^q\ dx>0\right\}.\notag
\end{equation}
Since it is known that
\begin{equation}
\Bigg|\left\{x\in\Omega\ : \ \liminf_{R\to0^+}
R^q\dashint_{\mathcal{B}_R}|Da|^q\ dx>0\right\}\Bigg|=0,\notag
\end{equation}
this establishes that the singular set is Lebesgue null.

Note that in \cite{goodrich5} only the standard growth setting was considered.  But there has been much research in the past 20 years into the so-called nonstandard growth setting, especially the variable growth setting.  In this case, if one considers minimizers of the integral functional
\begin{equation}\label{eq1.5}
\int_{\Omega}f(x,u,Du)\ dx,
\end{equation}
then instead of assuming that, say,
\begin{equation}
\big|f(x,u,\xi)\big|\le C\big(1+|\xi|^2\big)^{\frac{p}{2}},\notag
\end{equation}
which is a standard $p$-growth assumption, one instead assumes that, say,
\begin{equation}
\big|f(x,u,\xi)\big|\le C\big(1+|\xi|^2\big)^{\frac{p(x)}{2}},\notag
\end{equation}
where $p\ : \ \Omega\rightarrow(1,+\infty $ satisfies some sort of continuity condition -- e.g., H\"{o}lder continuity.  These sorts of irregular growth problems were investigated first by Zhikov \cite{zhikov1,zhikov4,zhikov2,zhikov3} and then Coscia and Mingione \cite{cosciamingione1}.  From there many papers have appeared that study the regularity properties of either minimizers of problems such as \eqref{eq1.5} or solutions of problems such as \eqref{eq1.1} -- see, for example, papers by Acerbi and Mingione \cite{acerbimingione2,acerbimingione1}, Eleuteri \cite{eleuteri2}, Eleuteri and Habermann \cite{eleuterihabermann1}, Foss and Fey \cite{feyfoss2}, Goodrich and Scapellato \cite{goodrichscapellato1}, Nio and Usuba \cite{nio1}, Ragusa and Tachikawa \cite{ragusa4,ragusa5,ragusatachikawa1}, Tachikawa \cite{tachikawa1}, Tachikawa and Usuba \cite{tachikawausuba1}, and Usuba \cite{usuba1}.  It should also be noted that while irregular growth problems are interesting to study from a purely mathematical view, they do possess meaningful applications to physical problems such as electrorheological fluids and thermistors.

So, in light of the substantial literature on $p(x)$-type growth problems it seems a natural question to ask whether the results for the $p$-Laplacian system \eqref{eq1.4} can be extended to a $p(x)$-Laplacian system \eqref{eq1.1}.  In fact, there are extra technical difficulties involved in this process due to the fact that the exponent is now variable instead of constant.  Thus, while problems \eqref{eq1.1} and \eqref{eq1.4} appear similar, addressing the regularity of solutions of \eqref{eq1.1} is a more complicated and delicate endeavor.

We conclude by mentioning some of the relevant literature.  For some classical papers addressing the regularity problem one may consult the papers of De Giorgi \cite{degiorgi1}, Giaquinta and Giusti \cite{giaquintagiusti1,giaquintagiusti2}, Giaquinta and Modica \cite{giaquintamodica1,giaquintamodica2,giaquintamodica3,giaquintamodica4}, Schoen and Uhlenbeck \cite{uhlenbeck2}, and Uhlenbeck \cite{uhlenbeck1}.  The monographs by Giaquinta \cite{giaquinta1} and Giusti \cite{giusti1} are useful references for a general overview of the techniques and problems in regularity theory.  The more recent papers by Mingione \cite{mingione1,mingione2}, which also contain substantial background on the regularity problem in a variety of contexts, are also useful references.

More specifically relevant to the results of this paper the study of the general functional \eqref{eq1.5} under weak conditions on the integrand $f$ has seen much work in the past several decades.  For example, the relaxed condition that $f$ be only asymptotically convex has featured prominently in recent years -- see, for example, the papers by Chipot and Evans \cite{chipot1}, Foss \cite{foss1}, Foss, et al. \cite{fossnapoliverde1,fossnapoliverde2,fossgoodrich1,fossgoodrich2}, Goodrich \cite{goodrich3,goodrich2}, Passarelli di Napoli and Verde \cite{passarelliverde1}, Raymond \cite{raymond1}, and Scheven and Schmidt \cite{scheven1,scheven2}.  Furthermore, in addition, to the case of $p(x)$-type growth already mentioned, other nonstandard growth such as either $(p,q)$-growth or $\Phi$-growth has been studied extensively as well -- see, for example, papers by Breit, et al. \cite{breit1}, Diening, et al. \cite{diening3,diening1,diening2}, Fey and Foss \cite{feyfoss1,feyfoss2}, and Giannetti, et al. \cite{giannetti1,giannetti2}.

Since our problems \eqref{eq1.1}--\eqref{eq1.2} also contain discontinuous coefficients in addition to irregular growth, we would like to mention some of the contributions in this direction, too.  One popular and mathematically interesting direction is to consider coefficients of Vanishing Mean Oscillation (VMO) structure -- see, for example, B\"{o}gelein, et al. \cite{bogeleinduzaar1,bogeleinduzaar2}, di Fazio \cite{difazio1}, Goodrich \cite{goodrich1}, Goodrich and Ragusa \cite{goodrichragusa1}, and Ragusa and Tachikawa \cite{ragusa10,ragusa1,ragusa2}.  An alternative direction is to consider coefficients belonging to a combination of either a suitable $L^p(\Omega)$ space or a suitable Sobolev space $W^{1,q}(\Omega)$.  Studies of this type include papers by Cupini, et al. \cite{cupini1}, Eleuteri, et al. \cite{eleuteri1}, Giova \cite{giova2,giova3}, Giova and Passarelli di Napoli \cite{giova1}, Goodrich \cite{goodrich5}, and Passarelli di Napoli \cite{passarelli1,passarelli2,passarelli3,passarelli4}.  In this context, it has generally been the case that some very specific restrictions have been imposed on, say, the degree of integrability of the coefficient function and the degree, if any, of weak differentiability of the coefficient function; sometimes these restrictions have been strongly related to the space dimension $n$.

Of particular relevance to our results here is the recent paper by Eleuteri, Marcellini, and Mascolo \cite{eleuterimarcellini1}.  In that work the authors consider minimizers of functionals having the form
\begin{equation}
\int_{\Omega}a(x)h\big(|Du|\big)^{p(x)}\ dx,\notag
\end{equation}
where $h\ : \ [0,+\infty)\rightarrow[0,+\infty)$ is an increasing, convex function with $h\in W^{2,\infty}$.  In particular, the coefficient function $x\mapsto a(x)$ satisfies $a\in W_{\text{loc}}^{1,r}(\Omega)$ for some $r>n$.  This is a much stronger condition than what we require here, seeing as our equivalent coefficient map can have $r<n$ and, especially, $r\in[1,n)$.

So, in this paper we do \emph{not} require such restrictive assumptions because by using the ideas introduced in \cite{goodrich5} we are able to obtain results under simply assumption \eqref{eq1.3} on the coefficient function $x\mapsto a(x)$.  In fact, we demonstrate that by using a direct argument we are able to obtain regularity results for \eqref{eq1.1}--\eqref{eq1.2} in spite of the highly irregular coefficient function.  Since this is first such demonstration of this fact in the \emph{irregular growth} setting, we hope that the ideas utilized here can be extended to other related problems involving irregular-type growth conditions.

\section{Preliminaries}

We begin by mentioning some preliminary results, which will be useful in our regularity results of Section 3.  The monographs by Giaquinta \cite{giaquinta1} and by Giusti \cite{giusti1} are excellent references for additional information related to the results presented here.  We first mention some notation that we will use.

\begin{notation}\label{notation2.1}
Throughout this paper we will abide by the following conventions.
\begin{itemize}
\item The number $C$ will be a generic constant, whose value may vary from line to line without specific mention.  Without loss of any generality we will always assume that $C\ge1$.  While $C$ may depend on specific structural and growth constants, it will never be allowed to depend on the radius of any ball used as an integration set.

\item Given a point $x_0$ of $\mathbb{R}^{n}$ and a real number $R>0$, we denote by the symbol $\mathcal{B}_{R}\left(x_0\right)$ the open ball defined by
\begin{equation}
\mathcal{B}_{R}\left(x_0\right):=\big\{x\in\mathbb{R}^{n}\ : \ \left|x-x_0\right|<R\big\}.\notag
\end{equation}
Since the center of the ball will be clear from context typically, ordinarily we will write $\mathcal{B}_R$ for $\mathcal{B}_R\left(x_0\right)$.

\item To denote the average value of a function $f\ : \ \Omega\subset\mathbb{R}^n\rightarrow\mathbb{R}$ we write
\begin{equation}
\dashint_{E}f(x)\ dx:=\frac{1}{|E|}\int_{E}f(x)\ dx,\notag
\end{equation}
where $E\subset\Omega$ is assumed to be Lebesgue measurable and $|E|$ denotes the Lebesgue measure of the set $E$.  In addition, given a point $x_0\in\Omega\subseteq\mathbb{R}^{n}$, some real number $R>0$, and a given function $u\ : \ \Omega\rightarrow\mathbb{R}^{N}$, by the symbol $(u)_{x_0,R}\in\mathbb{R}^{N}$ we denote the vector (or real number if $N=1$)
\begin{equation}
(u)_{x_0,R}:=\dashint_{\mathcal{B}_{R}\left(x_0\right)}u(x)\ dx.\notag
\end{equation}
Since the center, $x_0$, of the ball ordinarily will be clear from the context, we will usually suppress the dependence on the center and write $(u)_R$ when we, in fact, mean $(u)_{x_0,R}$.

\item Given matrices $\xi$, $\eta\in\mathbb{R}^{N\times n}$ by $\xi:\eta$ we denote the usual inner product defined by
\begin{equation}
\xi:\eta:=\sum_{i=1}^{N}\sum_{j=1}^{n}\xi_{ij}\eta_{ij}.\notag
\end{equation}
\end{itemize}
\end{notation}

We next recall the notion both of a Morrey space and, relatedly, of a Sobolev-Morrey space.  These will be used extensively in Section 3.

\begin{definition}\label{definition2.2}
For given $p\in[1,+\infty)$ and $0\le\gamma\le n$, the \textbf{\emph{Morrey space}} denoted by the symbol $L^{p,\gamma}\left(E;\mathbb{R}^{N}\right)$ is defined by
\begin{equation}\label{eq2.1}
L^{p,\gamma}\left(E;\mathbb{R}^N\right):=\left\{u\in L^p\left(E;\mathbb{R}^{N}\right)\ : \ \sup_{\substack{y\in E\\ \rho>0}}\frac{1}{\rho^{\gamma}}\int_{E\cap\mathcal{B}_{\rho}(y)}|u|^p\ dx<+\infty\right\},\notag
\end{equation}
where $E\subseteq\mathbb{R}^n$ is a measurable set.  We write $u\in L_{\text{loc}}^{p,\gamma}\left(E;\mathbb{R}^N\right)$ if $u\in L^{p,\gamma}\left(E';\mathbb{R}^N\right)$ for each $E'\Subset E$.
\end{definition}

\begin{definition}\label{definition2.3}
For given $p\in[1,+\infty)$ and $0\le \gamma\le n$, a function $u\in W^{1,p}\left(E,\mathbb{R}^N\right)$, where $E\subseteq\mathbb{R}^{n}$, is said to belong to the \textbf{\emph{Sobolev-Morrey space}} $W^{1,(p,\gamma)}\left(E;\mathbb{R}^N\right)$ provided that $u\in L^{p,
\gamma}\left(E;\mathbb{R}^N\right)$ and $Du\in L^{p,\gamma}\left(E;\mathbb{R}^N\right)$.  We write $u\in W_{\text{loc}}^{1,(p,\gamma)}\left(E;\mathbb{R}^N\right)$ provided that $u\in W^{1,(p,\gamma)}\left(E';\mathbb{R}^N\right)$ for each $E'\Subset E$.
\end{definition}

Next we recall the variable Sobolev spaces.  We refer the reader to the monograph by Diening, et al. \cite{dieningharjulehto1} for additional information on this concept.

\begin{definition}\label{definition2.5aaa}
For a uniformly continuous function $p\ : \ \Omega\rightarrow\mathbb{R}$ the \textbf{\emph{variable Sobolev space}}, denoted by $W_{\text{loc}}^{1,p(x)}(\Omega)$, is defined by
\begin{equation}
W_{\text{loc}}^{1,p(x)}(\Omega):=\left\{u\in L_{\text{loc}}^{1}(\Omega)\ : \ \int_{\Omega}|Du|^{p(x)}\ dx<+\infty\right\}.\notag
\end{equation}
\end{definition}

We next state the hypotheses we impose on problems \eqref{eq1.1}--\eqref{eq1.2}.  We already suggested most of these in Section 1.

\begin{list}{}{\setlength{\leftmargin}{.5in}\setlength{\rightmargin}{0in}}
\item[\textbf{A1:}] The function $p\ : \ \Omega\rightarrow\mathbb{R}$ satisfies each of the following conditions for real constants $\gamma_2>\gamma_1\ge2$.
\begin{enumerate}
\item $\gamma_1\le p(x)\le\gamma_2$ for each $x\in\Omega$

\item $p\in\mathscr{C}^{0,\sigma}(\Omega)$ for some $\sigma\in(0,1)$
\end{enumerate}

\item[\textbf{A2:}] The function $a\ : \ \Omega\rightarrow(0,+\infty)$ satisfies each of the following conditions for real constants $0<a_1<a_2<+\infty$ and $q>1$.
\begin{enumerate}
\item $a\in W_{\text{loc}}^{1,q}(\Omega)\cap L^{\infty}(\Omega)$

\item $a_1\le a(x)\le a_2$, for a.e. $x\in\Omega$
\end{enumerate}
\end{list}

\begin{remark}\label{remark2.5aaa}
Henceforth, we will write, without any loss, $\displaystyle\gamma_1:=\inf_{x\in\Omega}p(x)$ and $\displaystyle\gamma_2:=\sup_{x\in\Omega}p(x)$.
\end{remark}

\begin{remark}\label{remark2.4}
We assume that $p$ is H\"{o}lder continuous, as characterized by condition (A1.2), only for the sake of convenience.  It is entirely possible to assume instead that $p$ satisfies a log-type continuity condition -- see, for example, Fey and Foss \cite{feyfoss2}.
\end{remark}

\begin{remark}\label{remark2.5}
As we already remarked in Section 1, in contrast to the existing work on problems such as \eqref{eq1.1}--\eqref{eq1.2} with discontinuous coefficients (with the exception of \cite{goodrich5}), here we do not request a specific value for the integrability parameter $q$ in condition (A2.1).  Rather, any $q>1$ is allowed.
\end{remark}

We next state two reverse H\"{o}lder inequalities that will be necessary to complete the regularity argument of Section 3.  The first of these, Lemma \ref{lemma2.1}, is a reverse H\"{o}lder inequality for a minimizer, $u$, of the functional \eqref{eq1.2}.  The second of these, Lemma \ref{lemma2.2}, is, on the other hand, a reverse H\"{o}lder inequality for a minimizer, $v$, of the functional
\begin{equation}
\int_{\mathcal{B}_R}|Du|^{p(x)}\ dx\notag
\end{equation}
and satisfying the boundary condition $u\equiv v$ on $\partial\mathcal{B}_R$ for a given ball $\mathcal{B}_R\Subset\Omega$.

\begin{lemma}\label{lemma2.1}
Assume that conditions (A1)--(A2) hold and suppose that $u\in W_{\text{loc}}^{1,p(x)}(\Omega)$ is a minimizer of the functional \eqref{eq1.2}.  Then there exists a number $\delta_0>0$ such that for each $\delta\in\left(0,\delta_0\right)$ it follows both that $Du\in L_{\text{loc}}^{(1+\delta)p(x)}(\Omega)$ and that
\begin{equation}
\dashint_{\mathcal{B}_{R}(y)}|Du|^{(1+\delta)p(x)}\ dx\le C\left(\dashint_{\mathcal{B}_{2R}(y)}|Du|^{p(x)}\ dx\right)^{1+\delta}\notag
\end{equation}
for any $\mathcal{B}_{2R}(y)\Subset\Omega$.
\end{lemma}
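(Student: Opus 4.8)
The plan is to establish Lemma \ref{lemma2.1} via the standard Gehring-type self-improving mechanism: first derive a Caccioppoli-type inequality for the minimizer $u$, then convert it into a reverse H\"older inequality with increasing supports, and finally invoke Gehring's lemma (in its variable-exponent incarnation) to gain the higher integrability exponent $1+\delta$. The delicate point throughout is the presence of the variable exponent $p(x)$, so before anything else I would fix a ball $\mathcal{B}_{2R}(y)\Subset\Omega$, set $p_1:=\inf_{\mathcal{B}_{2R}}p$, $p_2:=\sup_{\mathcal{B}_{2R}}p$, and record that by (A1.2) we have $p_2-p_1\le C R^{\sigma}$, so on small balls the oscillation of $p$ is controlled by a power of the radius. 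This lets one pass freely (up to harmless constants depending only on $\gamma_1,\gamma_2$, and the diameter of $\Omega$) between $|Du|^{p(x)}$, $|Du|^{p_1}$, and $|Du|^{p_2}$ whenever $|Du|$ is not too large, and one absorbs the region where $|Du|$ is large using a standard truncation together with the normalization that $\dashint_{\mathcal{B}_{2R}}|Du|^{p(x)}\,dx$ may be assumed comparable to $1$ after rescaling.

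The first real step is the Caccioppoli inequality. Take a cutoff $\eta\in\mathscr{C}_c^{\infty}(\mathcal{B}_{2R}(y))$ with $\eta\equiv1$ on $\mathcal{B}_R(y)$ and $|D\eta|\le C/R$, and compare $u$ with $u-\eta(u-(u)_{2R})$, which is admissible because $a(x)|Du|^{p(x)}$ has the right structure and $u$ is a minimizer. Using $a_1\le a(x)\le a_2$ from (A2.2) to bound the coefficient above and below, minimality plus the convexity of $\xi\mapsto|\xi|^{p(x)}$ yields, after the usual manipulations with Young's inequality (with the $x$-dependent exponent, which is where one must be careful to keep the conjugate exponent uniformly controlled between $\gamma_1'$ and $\gamma_2'$),
\begin{equation}
\dashint_{\mathcal{B}_R(y)}|Du|^{p(x)}\,dx\le C\dashint_{\mathcal{B}_{2R}(y)}\left|\frac{u-(u)_{2R}}{R}\right|^{p(x)}\,dx+C R^{\beta}\notag
\end{equation}
for a suitable $\beta>0$ coming from the $R^{\sigma}$-oscillation of $p$; the additive $R^{\beta}$ term is a well-known artifact of the variable exponent and is harmless in what follows.

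The second step is the Sobolev--Poincar\'e step: to the right-hand side above one applies the Sobolev--Poincar\'e inequality to replace $|(u-(u)_{2R})/R|^{p(x)}$ by $\big(\dashint_{\mathcal{B}_{2R}}|Du|^{\theta p(x)}\,dx\big)^{1/\theta}$ for some $\theta=\theta(n,\gamma_1)<1$; again this requires freezing the exponent on the ball and paying an $R^{\beta}$ price. The outcome is a genuine reverse H\"older inequality with a "gap" exponent: there exist $\theta\in(0,1)$ and $C\ge1$ (depending only on $n$, $\gamma_1$, $\gamma_2$, $a_1$, $a_2$, and $\mathrm{diam}\,\Omega$) such that
\begin{equation}
\dashint_{\mathcal{B}_R(y)}|Du|^{p(x)}\,dx\le C\left(\dashint_{\mathcal{B}_{2R}(y)}|Du|^{\theta p(x)}\,dx\right)^{1/\theta}+C R^{\beta}\notag
\end{equation}
for all $\mathcal{B}_{2R}(y)\Subset\Omega$. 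The third and final step is to feed this into the variable-exponent version of Gehring's lemma (see, e.g., the formulations in Diening et al. \cite{dieningharjulehto1} or the Acerbi--Mingione circle of ideas \cite{acerbimingione2}), which produces a $\delta_0>0$ so that for every $\delta\in(0,\delta_0)$ one has $|Du|^{p(x)}\in L^{1+\delta}_{\mathrm{loc}}$ together with the claimed estimate $\dashint_{\mathcal{B}_R}|Du|^{(1+\delta)p(x)}\,dx\le C\big(\dashint_{\mathcal{B}_{2R}}|Du|^{p(x)}\,dx\big)^{1+\delta}$ (after re-absorbing the $R^{\beta}$ term, which is subcritical). I expect the main obstacle to be bookkeeping the exponent oscillation cleanly across the Caccioppoli and Sobolev--Poincar\'e steps — specifically, ensuring all the constants and auxiliary exponents ($\theta$, the conjugate of $p(x)$, and the power $\beta$) remain controlled \emph{uniformly} in $y$ and $R$ using only (A1) — whereas the coefficient $a(x)$ itself causes no trouble here since at this stage only its two-sided bound (A2.2) is used; the weak differentiability (A2.1) enters later, in Section 3.
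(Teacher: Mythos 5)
Your proposal is correct and follows essentially the same route as the paper's (sketched) proof: the paper also proceeds via a cutoff comparison map $v=u-\eta\big(u-(u)_R\big)$, uses only the two-sided bound $a_1\le a\le a_2$ to remove the coefficient, derives a Caccioppoli-type inequality via hole-filling with nested radii $\tfrac{R}{2}\le t<s\le R$, and then appeals to the Sobolev--Poincar\'e/Gehring machinery of Eleuteri's Theorem~3.1. The paper's version makes the hole-filling explicit whereas you state the resulting Caccioppoli inequality directly, but the decomposition, the key estimates, and the handling of the variable-exponent oscillation (localizing $p$ to $p_1,p_2$ on the ball and absorbing an $R^{\beta}$ error) are the same.
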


\begin{proof}
The proof is similar to \cite[Theorem 3.1]{eleuteri2}, and so, we just provide a brief sketch of the proof.  So, in the usual way we let $s$, $t\in(0,+\infty)$ be selected such that
\begin{equation}
0<\frac{R}{2}\le t<s\le R.\notag
\end{equation}
In addition, let $\eta\in\mathscr{C}_0^{\infty}\left(\mathcal{B}_R\right)$ be a cut-off function with the property that $\eta\equiv0$ for $x\in\Omega\setminus\mathcal{B}_s$, $\eta(x)\equiv1$ for $x\in\mathcal{B}_t$, and $\displaystyle|D\eta|\le\frac{2}{s-t}$ for $x\in\mathcal{B}_s\setminus\mathcal{B}_t$.  Finally define the functions $\varphi$ and $v$, respectively, by
\begin{equation}
\varphi(x):=\eta(x)\big(u(x)-(u)_R\big)\notag
\end{equation}
and
\begin{equation}
v(x):=u(x)-\varphi(x).\notag
\end{equation}
Then we see that
\begin{equation}
\int_{\mathcal{B}_t}|Du|^{p(x)}\ dx\le C\int_{\mathcal{B}_s\setminus\mathcal{B}_t}|Du|^{p(x)}\ dx+\frac{C}{(s-t)^{\gamma_2}}\int_{\mathcal{B}_R}\big|u(x)-(u)_R\big|^{p(x)}\ dx,\notag
\end{equation}
where $\displaystyle \gamma_2:=\sup_{x\in\Omega}p(x)$.  It then follows that
\begin{equation}
\dashint_{\mathcal{B}_{R}(y)}|Du|^{(1+\delta)p(x)}\ dx\le C\left(\dashint_{\mathcal{B}_{2R}(y)}|Du|^{p(x)}\ dx\right)^{1+\delta}\notag
\end{equation}
for any $\mathcal{B}_{2R}(y)\Subset\Omega$, as claimed.
\end{proof}

\begin{lemma}\label{lemma2.2}
Assume that conditions (A1)--(A2) hold and suppose that $w\in W^{1,p(x)}\big(\mathcal{B}_R(x_0)\big)$ for some ball $\mathcal{B}_R(x_0)\Subset\Omega$ and that $w$ is a minimizer of the functional
\begin{equation}
w\mapsto\int_{\mathcal{B}(x_0)}|Dw|^{p(x)}\ dx.\notag
\end{equation}
Suppose, in addition, that $u-w\in W_0^{1,p(x)}\left(\mathcal{B}_R\right)$, where $u\in W^{1,(1+r)p(x)}\left(\mathcal{B}_{2R}\right)$ for some $r>0$.  Then there exists a number $\delta_0>0$ such that for each $\delta\in\left(0,\delta_0\right)$ it holds that
\begin{equation}
\dashint_{\mathcal{B}_R}|Du-Dw|^{(1+\delta)p(x)}\ dx\le C\left(\dashint_{\mathcal{B}_R}|Du-Dw|^{p(x)}\ dx\right)^{1+\delta}+C\dashint_{\mathcal{B}_{2R}}\left(1+|Du|^{p(x)}\right)^{1+\delta}\ dx.\notag
\end{equation}
\end{lemma}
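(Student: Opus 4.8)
The plan is to run a Gehring-type reverse Hölder argument for the difference $h := u - w$, which belongs to $W_0^{1,p(x)}(\mathcal{B}_R)$, using the minimality of $w$ to control $Dw$ in terms of $Du$. First I would fix concentric balls $\mathcal{B}_t \subset \mathcal{B}_s \subset \mathcal{B}_R$ with $R/2 \le t < s \le R$ and a standard cut-off $\eta \in \mathscr{C}_0^\infty(\mathcal{B}_s)$ with $\eta \equiv 1$ on $\mathcal{B}_t$ and $|D\eta| \le 2/(s-t)$. The comparison function is $\tilde w := w + \eta\,(h - (h)_{\mathcal{B}_s})$, which agrees with $w$ on $\partial\mathcal{B}_R$, so by minimality of $w$ one has $\int_{\mathcal{B}_s}|Dw|^{p(x)}\,dx \le \int_{\mathcal{B}_s}|D\tilde w|^{p(x)}\,dx$. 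Expanding $D\tilde w = Dw + \eta(Dh - \text{(constant)}) + D\eta\,(h-(h)_{\mathcal{B}_s})$ (here the "constant" is $0$ since $h$'s average is subtracted, only its gradient $Dh$ appears) and using the elementary convexity/Young-type inequalities for the map $\xi \mapsto |\xi|^{p(x)}$ with frozen exponent on each ball — exactly as in the sketch of Lemma \ref{lemma2.1} — I would absorb the $\eta$-supported gradient terms and arrive, after adding $\int_{\mathcal{B}_t}|Du|^{p(x)}$ to both sides and using $Dh = Du - Dw$, at a Caccioppoli-type inequality of the form
\begin{equation}
\int_{\mathcal{B}_t}|Dh|^{p(x)}\,dx \le C\int_{\mathcal{B}_s\setminus\mathcal{B}_t}|Dh|^{p(x)}\,dx + \frac{C}{(s-t)^{\gamma_2}}\int_{\mathcal{B}_R}|h-(h)_{\mathcal{B}_s}|^{p(x)}\,dx + C\int_{\mathcal{B}_R}\bigl(1+|Du|^{p(x)}\bigr)\,dx.\notag
\end{equation}

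Next I would convert this into a reverse Hölder inequality with increasing supports. The first two terms on the right are handled the standard way: the "hole-filling" trick (adding $C\int_{\mathcal{B}_t}|Dh|^{p(x)}$ to both sides and using the iteration lemma of Giaquinta–Giusti, e.g. \cite[Lemma V.3.1]{giaquinta1}, to remove the $\mathcal{B}_s\setminus\mathcal{B}_t$ annulus) fixes $s = R$, $t = R/2$; and the term involving $|h-(h)_{\mathcal{B}_R}|^{p(x)}$ is controlled by Sobolev–Poincaré. Because the exponent is variable, the Sobolev–Poincaré step must be done with a frozen exponent on the ball together with the Hölder-continuity estimate $R^{|p(x)-p(y)|} \le C$ for $x,y \in \mathcal{B}_R$ (a consequence of (A1.2) for $R$ small), producing $\dashint_{\mathcal{B}_R}|h-(h)_{\mathcal{B}_R}|^{p(x)}\,dx \le C\bigl(\dashint_{\mathcal{B}_R}|Dh|^{p_R^\sharp(x)}\,dx\bigr)^{\text{power}} + (\text{lower order})$ for a slightly subcritical exponent, which after Young's inequality gives the sub-mean-value form
\begin{equation}
\dashint_{\mathcal{B}_{R/2}}|Dh|^{p(x)}\,dx \le C\Bigl(\dashint_{\mathcal{B}_R}|Dh|^{\theta p(x)}\,dx\Bigr)^{1/\theta} + C\dashint_{\mathcal{B}_R}\bigl(1+|Du|^{p(x)}\bigr)\,dx\notag
\end{equation}
for some $\theta \in (0,1)$. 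Gehring's lemma in its variable-exponent version (as used in \cite{eleuteri2, acerbimingione1}) then upgrades integrability of $|Dh|^{p(x)}$ to $(1+\delta)p(x)$ for some $\delta \in (0,\delta_0)$ and yields precisely the asserted inequality, with the $|Du|$-term surviving on the right because $u \in W^{1,(1+r)p(x)}(\mathcal{B}_{2R})$ guarantees the forcing term is itself in a better space.

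The main obstacle — and where I would spend the most care — is the variable-exponent bookkeeping in the Sobolev–Poincaré and Gehring steps. With a constant exponent these are off-the-shelf, but here each estimate must be localized, the exponent frozen at a judiciously chosen point of the ball (e.g. where $p$ attains its minimum on $\overline{\mathcal{B}_R}$), and the discrepancy $|p(x) - p(y)|$ converted into a harmless multiplicative constant via the smallness of $R$ and (A1.2); one also has to track that the "$+1$" inside $(1+|Du|^{p(x)})$ is genuinely needed to absorb the constants generated when $p(x)$ drops below $2$ is impossible here since $\gamma_1 \ge 2$, but is still needed for the frozen-exponent normalizations. A secondary technical point is ensuring the exponent $\theta p(x)$ in the sub-mean-value inequality stays above $1$ uniformly, which uses $\gamma_1 \ge 2$ from (A1.1) and gives room to spare. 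None of these steps is conceptually new — they are exactly the adaptations already present in \cite{eleuteri2} and \cite{acerbimingione1} — so as with Lemma \ref{lemma2.1} I would present this as a sketch, citing those references for the variable-exponent Gehring machinery and indicating only the modifications needed to carry the $|Du|^{p(x)}$ forcing term through.
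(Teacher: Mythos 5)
Your overall strategy --- a Caccioppoli inequality for $Dh := Du - Dw$ followed by a variable-exponent Gehring lemma, with $Du$ carried along as a forcing term --- is the right one and matches the references the paper itself cites. But there is a genuine gap: the lemma asserts higher integrability of $Dh$ \emph{up to the boundary} of $\mathcal{B}_R$, i.e.\ the average on the left-hand side is over the full ball, whereas your construction only produces the interior estimate. The competitor $\tilde w = w + \eta\big(h-(h)_{\mathcal{B}_s}\big)$ uses a cut-off $\eta\in\mathscr{C}_0^{\infty}\left(\mathcal{B}_s\right)$ with $\mathcal{B}_s\subseteq\mathcal{B}_R$, so $\eta$ vanishes near $\partial\mathcal{B}_R$; after hole-filling, the sub-mean-value inequality you obtain for $|Dh|^{p(x)}$ holds only for balls $\mathcal{B}_{2\rho}(y)\Subset\mathcal{B}_R$, and Gehring's lemma applied to such a family upgrades integrability only on compact subsets of $\mathcal{B}_R$ (e.g.\ on $\mathcal{B}_{R/2}$). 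There is no way to then pass to $\dashint_{\mathcal{B}_R}$ on the left, which is the form the lemma states and the form used repeatedly in the proof of Theorem~\ref{theorem3.1}. A telling symptom is that your argument never actually uses the hypothesis $u-w\in W_0^{1,p(x)}\left(\mathcal{B}_R\right)$: it would run verbatim for any $h\in W^{1,p(x)}\left(\mathcal{B}_R\right)$, and the resulting boundary-free statement is false.

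To close the gap one must exploit the zero trace of $h$: extend $h$ and the forcing $|Du|$ by zero outside $\mathcal{B}_R$ and, for balls $\mathcal{B}_{2\rho}(y)$ meeting $\partial\mathcal{B}_R$, derive a \emph{boundary} Caccioppoli inequality by testing minimality of $w$ against $\tilde w = w+\eta^{\gamma_2}h$, where now $\eta$ is a cut-off of $\mathcal{B}_{\rho}(y)$ inside $\mathcal{B}_{2\rho}(y)$ that need \emph{not} vanish on $\partial\mathcal{B}_R$ --- the competitor remains admissible because $\left(1-\eta^{\gamma_2}\right)h\in W_0^{1,p(x)}\left(\mathcal{B}_R\right)$, no mean subtraction being required, and the zeroth-order term $|h|$ is then controlled by Sobolev--Poincar\'e using the zero trace. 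Combining the interior and boundary Caccioppoli inequalities, the up-to-the-boundary (``global'') version of Gehring's lemma, cf.\ \cite[Ch.~V]{giaquinta1} or \cite[Ch.~6]{giusti1} in the variable-exponent adaptation of \cite{eleuteri2}, then gives the claimed inequality on the full $\mathcal{B}_R$. The paper's own proof is a pointer to \cite[Theorem~3.1]{eleuteri2} together with \cite[Lemma~2.15]{goodrich4}, and it is precisely the latter that supplies the boundary comparison estimate your sketch omits.
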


\begin{proof}
Essentially combining the arguments in \cite[Theorem 3.1]{eleuteri2} and \cite[Lemma 2.15]{goodrich4} we are able to obtain this result.  Therefore, we omit the proof of this lemma.
\end{proof}

Finally, the following lemma will be useful in Section 3.  It relates the Sobolev-Morrey spaces to the H\"{o}lder spaces.

\begin{lemma}\label{lemma2.9}
Assume that $\Omega$ is open, bounded, and has a Lipschitz boundary.  In addition, assume that $\beta\in(n-k,n)$ for some $n>k\ge2$.  Then it holds that
\begin{equation}
W^{1,(k,\beta)}\left(\Omega;\mathbb{R}^{N}\right)\subseteq\mathscr{C}^{0,1-\frac{n-\beta}{k}}\left(\overline{\Omega};\mathbb{R}^{N}\right).\notag
\end{equation}
\end{lemma}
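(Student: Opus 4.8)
The plan is to deduce this from Campanato's integral characterization of Hölder continuity together with the Poincaré inequality. Recall the classical fact (see Giaquinta \cite{giaquinta1} or Giusti \cite{giusti1}) that, for a bounded Lipschitz domain $\Omega$ and an exponent $n<\lambda\le n+1$, a function $u\in L^1\big(\Omega;\mathbb{R}^N\big)$ belongs to $\mathscr{C}^{0,\lambda-n}\big(\overline{\Omega};\mathbb{R}^N\big)$ if and only if
\begin{equation}
[u]_{\mathcal{L}^{1,\lambda}}:=\sup_{\substack{x_0\in\overline{\Omega}\\ 0<\rho<\operatorname{diam}\Omega}}\rho^{-\lambda}\int_{\Omega\cap\mathcal{B}_\rho(x_0)}\big|u-(u)_{\Omega\cap\mathcal{B}_\rho(x_0)}\big|\ dx<+\infty.\notag
\end{equation}
Hence it suffices to verify that any $u\in W^{1,(k,\beta)}\big(\Omega;\mathbb{R}^N\big)$ satisfies such an estimate with $\lambda:=n+1-\frac{n-\beta}{k}$, since then $\lambda-n=1-\frac{n-\beta}{k}$ is exactly the claimed Hölder exponent, and the standing hypotheses $n-k<\beta<n$ (together with $k\ge2$) guarantee precisely that $n<\lambda<n+1$.

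First I would treat balls centered at interior points. Fixing $x_0\in\Omega$ and $\rho>0$ with $\mathcal{B}_\rho(x_0)\Subset\Omega$, the Poincaré inequality on the ball followed by Hölder's inequality with exponents $k$ and $\frac{k}{k-1}$ gives
\begin{equation}
\dashint_{\mathcal{B}_\rho(x_0)}\big|u-(u)_{x_0,\rho}\big|\ dx\le C\rho\dashint_{\mathcal{B}_\rho(x_0)}|Du|\ dx\le C\rho\left(\dashint_{\mathcal{B}_\rho(x_0)}|Du|^k\ dx\right)^{1/k}.\notag
\end{equation}
Invoking the Morrey bound $\int_{\mathcal{B}_\rho(x_0)}|Du|^k\ dx\le [Du]_{L^{k,\beta}}^k\,\rho^\beta$ and multiplying through by $|\mathcal{B}_\rho(x_0)|\le C\rho^n$ yields
\begin{equation}
\int_{\mathcal{B}_\rho(x_0)}\big|u-(u)_{x_0,\rho}\big|\ dx\le C\,[Du]_{L^{k,\beta}}\,\rho^{\,n+1+\frac{\beta-n}{k}}=C\,[Du]_{L^{k,\beta}}\,\rho^{\lambda},\notag
\end{equation}
which is exactly the required Campanato estimate for interior balls.

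Next I would upgrade this to balls centered at boundary points, which is where the Lipschitz regularity of $\partial\Omega$ enters. For $x_0\in\overline{\Omega}$ and $0<\rho<\operatorname{diam}\Omega$, the truncated set $\Omega\cap\mathcal{B}_\rho(x_0)$ is, after locally straightening the boundary, a bi-Lipschitz image of a fixed model domain on which the Poincaré inequality holds with a constant independent of $x_0$ and $\rho$; equivalently one may use a Sobolev extension operator for Lipschitz domains, or combine the uniform measure-density bound $|\Omega\cap\mathcal{B}_\rho(x_0)|\ge c\,\rho^n$ with a relative Poincaré inequality. Either way, the chain of inequalities above carries over verbatim with $\mathcal{B}_\rho(x_0)$ replaced by $\Omega\cap\mathcal{B}_\rho(x_0)$, giving $[u]_{\mathcal{L}^{1,\lambda}(\Omega)}\le C\,[Du]_{L^{k,\beta}(\Omega)}<+\infty$. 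The Campanato characterization then yields $u\in\mathscr{C}^{0,\lambda-n}\big(\overline{\Omega};\mathbb{R}^N\big)=\mathscr{C}^{0,1-\frac{n-\beta}{k}}\big(\overline{\Omega};\mathbb{R}^N\big)$, as claimed.

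The only genuinely nontrivial point is the boundary step: verifying that the Poincaré inequality (or the extension) holds uniformly on the truncated domains $\Omega\cap\mathcal{B}_\rho(x_0)$ is precisely the content of the Lipschitz-boundary hypothesis, while all the $\rho$-scaling is routine. Since this is a standard Sobolev--Morrey embedding, in the actual write-up it suffices to cite Giaquinta \cite{giaquinta1} or Giusti \cite{giusti1} for both the Campanato isomorphism and the uniform boundary Poincaré inequality, and we would accordingly present only the brief computation above or simply refer the reader to those monographs.
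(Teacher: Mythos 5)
Your argument is correct and is precisely the classical proof of the Sobolev--Morrey embedding (Poincar\'{e} plus Campanato's isomorphism $\mathcal{L}^{1,\lambda}(\Omega)\cong\mathscr{C}^{0,\lambda-n}(\overline{\Omega})$ on Lipschitz domains for $n<\lambda<n+1$), which is the same route the paper implicitly relies on: the paper states Lemma \ref{lemma2.9} without proof, deferring to the monographs of Giaquinta \cite{giaquinta1} and Giusti \cite{giusti1}, and those monographs prove it exactly as you do. The exponent bookkeeping $\lambda=n+1-\frac{n-\beta}{k}\in(n,n+1)$, the interior Poincar\'{e}--H\"{o}lder--Morrey chain, and the reduction of the boundary case to the uniform measure-density property of Lipschitz domains are all in order.
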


\section{Main Results}

We begin this section by stating and proving the primary result of this paper -- namely, the partial H\"{o}lder continuity of a minimizer, $u$, of integral functional \eqref{eq1.2}.  As a consequence of Theorem \ref{theorem3.1} we then obtain as a corollary the partial regularity of any weak solution of the $p(x)$-Laplacian PDE \eqref{eq1.1}.

\begin{theorem}\label{theorem3.1}
Let $u\in W_{\text{loc}}^{1,p(x)}(\Omega)$ be a local minimizer of the functional \eqref{eq1.2} and assume that conditions (A1)--(A2) are true.  Then there is an open set $\Omega_0\subseteq\mathbb{R}^{n}$ such that for each $n-\gamma_1<\alpha<n$ it holds that $\big|\Omega\setminus\Omega_0\big|=0$ and $u\in\mathscr{C}_{\text{loc}}^{0,1-\frac{n-\alpha}{\gamma_1}}\big(\Omega_0\big)$, where
\begin{equation}
\Omega\setminus\Omega_0\subseteq\left\{x\in\Omega\ : \ \liminf_{R\to0^+}R^q\dashint_{\mathcal{B}_R}|Da|^q\ dx>0\right\}.\notag
\end{equation}
\end{theorem}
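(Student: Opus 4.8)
The plan is to follow the by-now-standard comparison strategy for partial regularity: freeze the coefficient $a(x)$ and the exponent $p(x)$ on a small ball, compare the minimizer $u$ with the minimizer $w$ of the frozen functional, and use a Caccioppoli-type estimate together with the higher-integrability afforded by Lemmas \ref{lemma2.1} and \ref{lemma2.2} to run an iteration producing a Morrey-type decay for $Du$. The ultimate payoff will be that, away from the bad set, $Du$ lies locally in a Morrey space $L^{\gamma_1,\alpha}$ with $\alpha>n-\gamma_1$, whence Lemma \ref{lemma2.9} (with $k=\gamma_1\ge2$) upgrades $u$ to $\mathscr{C}^{0,1-\frac{n-\alpha}{\gamma_1}}_{\text{loc}}$ on the good set $\Omega_0$.

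In detail, first I would fix a ball $\mathcal{B}_{2R}=\mathcal{B}_{2R}(x_0)\Subset\Omega$ and let $w\in W^{1,p(x)}(\mathcal{B}_R)$ be the minimizer of $v\mapsto\int_{\mathcal{B}_R}|Dv|^{p(x)}\,dx$ with $w=u$ on $\partial\mathcal{B}_R$. Using minimality of $u$ for \eqref{eq1.2} and minimality of $w$ for the frozen functional, one compares $\int a(x)|Du|^{p(x)}$ against $\int a(x)|Dw|^{p(x)}$, moving the oscillation of $a$ and of $p(x)$ across $\mathcal{B}_R$ to the right-hand side. Here is where condition (A2.1) enters: the oscillation of $a$ over $\mathcal{B}_R$ is controlled, via Sobolev--Poincaré-type reasoning, by $R\left(\dashint_{\mathcal{B}_R}|Da|^q\,dx\right)^{1/q}$, i.e. precisely the quantity $R^q\dashint_{\mathcal{B}_R}|Da|^q\,dx$ raised to a power governs the error. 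The oscillation of $p(x)$ contributes a term estimated by the Hölder modulus from (A1.2) together with the higher-integrability exponent $\delta_0$ from Lemma \ref{lemma2.1} (a standard Gehring-type bound needed to make sense of $|Du|^{p(x)+\varepsilon}$ integrals). Combining these yields an estimate of the form
\begin{equation}
\dashint_{\mathcal{B}_R}|Du-Dw|^{p(x)}\,dx\le C\,\omega(R)\,\dashint_{\mathcal{B}_{2R}}\left(1+|Du|^{p(x)}\right)dx,\notag
\end{equation}
where $\omega(R)$ collects $R^{\sigma}$-type and $R^q\dashint_{\mathcal{B}_R}|Da|^q$-type contributions.

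Next I would invoke the decay estimate for the frozen minimizer $w$: since $w$ minimizes a $p(x)$-energy with $p$ now essentially constant on the ball, the known regularity theory (Acerbi--Mingione, Eleuteri, et al.) gives that $\dashint_{\mathcal{B}_{\rho}}|Dw|^{p(x)}\,dx$ decays like $(\rho/R)^{n-\varepsilon'}$ for any $\varepsilon'>0$ when the excess is small, or at worst carries a controlled power. Adding the comparison error and the frozen decay, then using the higher-integrability Lemma \ref{lemma2.2} to absorb the $|Du-Dw|$ term via a reverse-Hölder/Gehring step, one obtains an inequality of the iterative form
\begin{equation}
\Phi(\rho)\le C\left(\frac{\rho}{R}\right)^{n}\Phi(R)+C\,\omega(R)\,\Phi(R)+C\,\omega(R),\notag\qquad \Phi(R):=\dashint_{\mathcal{B}_R}\left(1+|Du|^{p(x)}\right)dx.
\end{equation}
A standard iteration lemma (of the type in Giaquinta's monograph) then shows that at every point $x_0$ for which $\liminf_{R\to0^+}R^q\dashint_{\mathcal{B}_R}|Da|^q\,dx\le\varepsilon_0$ for a suitably small threshold $\varepsilon_0$ — and, crucially, this property is open, giving the open set $\Omega_0$ — one has $\dashint_{\mathcal{B}_\rho}|Du|^{\gamma_1}\,dx\lesssim \rho^{-(n-\alpha)}$ for every $\alpha<n$, i.e. $Du\in L^{\gamma_1,\alpha}_{\text{loc}}(\Omega_0)$. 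Lemma \ref{lemma2.9} then finishes the Hölder claim. Finally, since it is classical that the set where $\liminf_{R\to0^+}R^q\dashint_{\mathcal{B}_R}|Da|^q\,dx>0$ has Lebesgue measure zero (a Lebesgue-differentiation argument, as cited after \eqref{eq1.4}), we conclude $|\Omega\setminus\Omega_0|=0$, with the stated inclusion.

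The main obstacle I expect is the interplay of the variable exponent with the comparison step: unlike the constant-$p$ case of \cite{goodrich5}, the quantities $|Du|^{p(x)}$ and $|Dw|^{p(x)}$ cannot simply be compared pointwise, and passing between $|Du|^{p(x)}$ and $|Du|^{p(x_0)}$ requires carefully exploiting the Hölder continuity (A1.2) together with the higher-integrability exponent $\delta_0$ so that the exponential factor $R^{-\sigma|\log|Du||}$-type terms stay bounded on average. Making the error term $\omega(R)$ genuinely go to zero (rather than merely stay bounded) along the good radii, uniformly enough to feed the iteration lemma, is the delicate technical heart of the argument; everything else is a careful but routine assembly of Caccioppoli, Sobolev--Poincaré, Gehring, and the iteration lemma.
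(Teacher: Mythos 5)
Your proposal follows essentially the same strategy as the paper: comparison with the frozen minimizer $w$ of $v\mapsto\int_{\mathcal{B}_R}|Dv|^{p(x)}\,dx$, splitting the error into an $a$-oscillation piece and a $p(x)$-oscillation piece, using the reverse H\"older inequalities of Lemmas \ref{lemma2.1}--\ref{lemma2.2} plus the decay estimate for $w$, feeding the result into a standard iteration lemma to get $Du\in L^{\gamma_1,\alpha}_{\text{loc}}$ off the bad set, and then invoking Lemma \ref{lemma2.9} and the Lebesgue-null property of the bad set. One point you should tighten, since it is precisely the novelty under condition (A2.1): your claim that ``the oscillation of $a$ over $\mathcal{B}_R$ is controlled, via Sobolev--Poincar\'e-type reasoning, by $R\bigl(\dashint_{\mathcal{B}_R}|Da|^q\,dx\bigr)^{1/q}$'' is not available when $q$ is small (e.g.\ $1<q<n$), because a pointwise oscillation bound would require $q>n$, and a naive H\"older split of $\int|a-(a)_R|\,|Du|^{p(x)}$ against $L^{q'}$ demands far more higher integrability of $Du$ than the Gehring gain $\delta_0$ from Lemma \ref{lemma2.1} provides. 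The paper's resolution, which you gesture at with ``raised to a power'' but do not articulate, is to exploit $a\in L^\infty$ to replace $|a(x)-(a)_R|$ by $|a(x)-(a)_R|^{\kappa}$ for a small $\kappa$ chosen so that $\kappa(1+\delta)/\delta<q$; only then does H\"older with the modest exponent $(1+\delta)/\delta$ pair with Poincar\'e in $W^{1,q}$ and produce the factor $\bigl(R^q\dashint_{\mathcal{B}_R}|Da|^q\,dx\bigr)^{\kappa/q}$. With that ingredient made explicit, your outline matches the paper's proof.
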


\begin{proof}
In what follows we will let $w$ be a minimizer for the functional
\begin{equation}\label{eq3.1}
\int_{\mathcal{B}_R}|Dw|^{p(x)}\ dx,
\end{equation}
where $w\equiv u$ on $\partial\mathcal{B}_R$.  As in the statement of the theorem the function $u$ will be a minimizer of functional \eqref{eq1.2} -- i.e., $u$ is a minimizer for
\begin{equation}
\int_{\Omega}a(x)|Du|^{p(x)}\ dx.\notag
\end{equation}

Now, in what follows, we will assume that $\displaystyle x_0\notin\left\{x\in\Omega\ : \ \liminf_{R\to0^+}R^q\dashint_{\mathcal{B}_R}|Da|^q\ dx>0\right\}$ is fixed but otherwise arbitrary -- i.e., we select $x_0$ such that for each $\varepsilon_1>0$ given there exists a number $R_0>0$ such that
\begin{equation}
R_0^q\dashint_{\mathcal{B}_{R_0}\left(x_0\right)}|Da|^q\ dx<\varepsilon_1.\notag
\end{equation}
Moreover, we will let $R:=R\left(x_0,\Omega\right)$ be chosen such that $\displaystyle0<R<\frac{1}{4}\textsf{dist}\left(x_0,\partial\Omega\right)$.  In addition, since $w$ is a minimizer of problem \eqref{eq3.1}, we recall that $w$ satisfies the \emph{a priori} estimate (see Coscia and Mingione \cite{cosciamingione1} or Ragusa, et al. \cite{ragusa5})
\begin{equation}\label{eq3.2}
\int_{\mathcal{B}_{\rho}}|Dw|^{\rho_2(4R)}\ dx\le C\left(\frac{\rho}{R}\right)^{n-\tau}\left[\int_{\mathcal{B}_R}|Dw|^{\rho_2(4R)}\ dx+R^{n-\tau}\right],
\end{equation}
for each $\displaystyle0<\rho<\frac{R}{4}$ and any $\tau>0$.  Henceforth, we let $\delta>0$ be a number chosen such that
\begin{equation}
\delta<\frac{\sigma}{n}.\notag
\end{equation}
In addition, similar to Ragusa, et al. \cite{ragusa5}, we let $\varepsilon>0$ be a number such that $\displaystyle\varepsilon<\frac{\delta}{2}$, and we assume from now on that $R>0$ is chosen sufficiently small such that
\begin{equation}
(1+\varepsilon)\rho_2(4R)\le\left(1+\frac{\delta}{2}\right)\rho_2(4R)<(1+\delta)\rho_1(4R)\le(1+\delta)p(x),\notag
\end{equation}
for all $x\in\mathcal{B}_R\left(x_0\right)$, where we define the numbers $\rho_1(r)$ and $\rho_2(r)$, respectively, by
\begin{equation}
\rho_1(r):=\inf_{x\in\mathcal{B}_r}p(x)\notag
\end{equation}
and
\begin{equation}
\rho_2(r):=\sup_{x\in\mathcal{B}_r}p(x).\notag
\end{equation}

So, to begin the proof we use estimate \eqref{eq3.2} above, together with \cite[Lemma 8.6]{giusti1} (since $\xi\mapsto|\xi|^{\rho_2(4R)}$ is of class $\mathscr{C}^{2}$ since $\rho_2(4R)\ge2$), to calculate
\begin{equation}\label{eq3.3}
\begin{split}
&\int_{\mathcal{B}_{\rho}}|Du|^{\rho_2(4R)}\ dx\\
&\le C\int_{\mathcal{B}_{\rho}}|Dw|^{\rho_2(4R)}\ dx+C\int_{\mathcal{B}_{\rho}}|Du-Dw|^{\rho_2(4R)}\ dx\\
&\le C\int_{\mathcal{B}_{\rho}}|Dw|^{\rho_2(4R)}\ dx+C\int_{\mathcal{B}_R}|Du-Dw|^2\left(|Du|^{\rho_2(4R)-2}+|Dw|^{\rho_2(4R)-2}\right)\ dx\\
&\le C\left(\frac{\rho}{R}\right)^{n-\tau}\left[\int_{\mathcal{B}_R}|Dw|^{\rho_2(4R)}\ dx+R^{n-\tau}\right]\\
&\quad\quad+C\int_{\mathcal{B}_R}|Du-Dw|^2\left(|Du|^{\rho_2(4R)-2}+|Dw|^{\rho_2(4R)-2}\right)\ dx\\
&\le C\left(\frac{\rho}{R}\right)^{n-\tau}\left[\int_{\mathcal{B}_R}|Dw|^{\rho_2(4R)}\ dx+R^{n-\tau}\right]\\
&\quad\quad+C\int_{\mathcal{B}_R}\left\{|Du|^{\rho_2(4R)}-|Dw|^{\rho_2(4R)}-\frac{\partial}{\partial\xi}\left[|Dw|^{\rho_2(4R)}\right]:[Du-Dw]\right\}\ dx\\
&\le C\left(\frac{\rho}{R}\right)^{n-\tau}\left[\int_{\mathcal{B}_R}|Dw|^{\rho_2(4R)}\ dx+R^{n-\tau}\right]+C\int_{\mathcal{B}_R}\left\{|Du|^{\rho_2(4R)}-|Dw|^{\rho_2(4R)}\right\}\ dx\\
&\quad\quad+C\int_{\mathcal{B}_R}\frac{\partial}{\partial\xi}\left[|Dw|^{p(x)}-|Dw|^{\rho_2(4R)}\right]:[Du-Dw]\ dx\\
&\quad\quad\quad\quad-C\underbrace{\int_{\mathcal{B}_R}\frac{\partial}{\partial\xi}\left[|Dw|^{p(x)}\right]:[Du-Dw]\ dx}_{=0}\\
&=C\left(\frac{\rho}{R}\right)^{n-\tau}\left[\int_{\mathcal{B}_R}|Dw|^{\rho_2(4R)}\ dx+R^{n-\tau}\right]+C\int_{\mathcal{B}_R}\Big\{|Du|^{\rho_2(4R)}-|Dw|^{\rho_2(4R)}\Big\}\ dx\\
&\quad\quad+C\int_{\mathcal{B}_R}\frac{\partial}{\partial\xi}\left[|Dw|^{p(x)}-|Dw|^{\rho_2(4R)}\right]:[Du-Dw]\ dx,
\end{split}
\end{equation}
where to get the final equality we use the fact that $w$ satisfies the Euler-Lagrange equation -- that is, we have that
\begin{equation}
C\int_{\mathcal{B}_R}\frac{\partial}{\partial\xi}\left[|Dw|^{p(x)}\right]:[Du-Dw]\ dx=0.\notag
\end{equation}
Now, we work on estimating from above the quantity
\begin{equation}
C\int_{\mathcal{B}_R}\Big\{|Du|^{\rho_2(4R)}-|Dw|^{\rho_2(4R)}\Big\}\ dx,\notag
\end{equation}
which appears on the right-hand side of \eqref{eq3.3}.  To this end we write
\begin{equation}\label{eq3.4}
\begin{split}
C\int_{\mathcal{B}_R}\Big\{|Du|^{\rho_2(4R)}-|Dw|^{\rho_2(4R)}\Big\}\ dx&=\frac{C}{(a)_R}\int_{\mathcal{B}_R}\Big\{(a)_R|Du|^{\rho_2(4R)}-(a)_R|Dw|^{\rho_2(4R)}\Big\}\ dx\\
&=\frac{C}{(a)_R}\int_{\mathcal{B}_R}(a)_R|Du|^{\rho_2(4R)}-a(x)|Du|^{p(x)}\ dx\\
&\quad\quad+\frac{C}{(a)_R}\int_{\mathcal{B}_R}a(x)|Du|^{p(x)}-(a)_R|Dw|^{p(x)}\ dx\\
&\quad\quad\quad\quad+\frac{C}{(a)_R}\int_{\mathcal{B}_R}(a)_R|Dw|^{p(x)}-(a)_R|Dw|^{\rho_2(4R)}\ dx\\
&=:I+II+III,
\end{split}
\end{equation}
using that $0<(a)_R<+\infty$ for all admissible $R>0$.

We now work to estimate quantities $I$--$III$ from \eqref{eq3.4}.  So, first we write
\begin{equation}
\begin{split}
I&=\frac{C}{(a)_R}\int_{\mathcal{B}_R}(a)_R|Du|^{\rho_2(4R)}-a(x)|Du|^{p(x)}\ dx\\
&=\frac{C}{(a)_R}\int_{\mathcal{B}_R}(a)_R|Du|^{\rho_2(4R)}-(a)_R|Du|^{p(x)}\ dx\\
&\quad\quad+\frac{C}{(a)_R}\int_{\mathcal{B}_R}(a)_R|Du|^{p(x)}-a(x)|Du|^{p(x)}\ dx\\
&\le C\int_{\mathcal{B}_R}\big|a(x)-(a)_R\big||Du|^{p(x)}\ dx\\
&\quad\quad+C\int_{\mathcal{B}_R}\big|(a)_R\big|\left||Du|^{\rho_2(4R)}-|Du|^{p(x)}\right|\ dx\\
&=:I'+I'',\notag
\end{split}
\end{equation}
using again, in the last inequality, the boundedness of $(a)_R$.  To estimate $I'$, letting $\kappa>0$ be a sufficiently small constant to be fixed later in the proof, we write
\begin{equation}\label{eq3.5}
\begin{split}
I'&=C\int_{\mathcal{B}_R}\big|a(x)-(a)_R\big||Du|^{p(x)}\ dx\\
&\le C\int_{\mathcal{B}_R}\big|a(x)-(a)_R\big|^{\kappa}|Du|^{p(x)}\\
&\le CR^n\left(\dashint_{\mathcal{B}_R}\big|a(x)-(a)_R\big|^{\frac{\kappa}{\delta}(1+\delta)}\ dx\right)^{\frac{\delta}{1+\delta}}\left(\dashint_{\mathcal{B}_R}|Du|^{(1+\delta)p(x)}\ dx\right)^{\frac{1}{1+\delta}},
\end{split}
\end{equation}
where the initial sequence of calculations is along the lines of \cite[(3.7)]{goodrich5}.  Recalling the definition of the number $\delta$ at the beginning of this proof, we may further refine \eqref{eq3.5} by writing
\begin{equation}\label{eq3.6}
\begin{split}
&CR^n\left(\dashint_{\mathcal{B}_R}\big|a(x)-(a)_R\big|^{\frac{\kappa}{\delta}(1+\delta)}\ dx\right)^{\frac{\delta}{1+\delta}}\left(\dashint_{\mathcal{B}_R}|Du|^{(1+\delta)p(x)}\ dx\right)^{\frac{1}{1+\delta}}\\
&\quad\le CR^n\left(\dashint_{\mathcal{B}_R}\big|a(x)-(a)_R\big|^{\frac{\kappa}{\delta}(1+\delta)}\ dx\right)^{\frac{\delta}{1+\delta}}\left(\left(\dashint_{\mathcal{B}_{2R}}|Du|^{p(x)}\ dx\right)^{1+\delta}\right)^{\frac{1}{1+\delta}}\\
&\quad\le CR^n\left(R^{\frac{\kappa(1+\delta)}{\delta}}\dashint_{\mathcal{B}_R}|Da|^{\frac{\kappa(1+\delta)}{\delta}}\ dx\right)^{\frac{\delta}{1+\delta}}\left(\dashint_{\mathcal{B}_{2R}}|Du|^{p(x)}\ dx\right),
\end{split}
\end{equation}
where to obtain the first inequality we have used the reverse H\"{o}lder inequality for $u$ (i.e., Lemma \ref{lemma2.1}), whereas to obtain the second inequality we have used Poincar\'{e}'s inequality using that $a\in W^{1,q}(\Omega)$, which is applicable since we may assume that $\kappa$ was selected such that
\begin{equation}
\kappa<\min\left\{\frac{\delta q}{1+\delta},1\right\}.\notag
\end{equation}
Then, noting that $Da\in L^q(\Omega)\subseteq L^{\frac{\kappa(1+\delta)}{\delta}}(\Omega)$, it follows that inequality \eqref{eq3.6} can be further rewritten as
\begin{equation}
\begin{split}
&CR^n\left(\dashint_{\mathcal{B}_R}\big|a(x)-(a)_R\big|^{\frac{\kappa}{\delta}(1+\delta)}\ dx\right)^{\frac{\delta}{1+\delta}}\left(\dashint_{\mathcal{B}_R}|Du|^{(1+\delta)p(x)}\ dx\right)^{\frac{1}{1+\delta}}\\
&\quad\le CR^n\left(R^{\frac{\kappa(1+\delta)}{\delta}}\dashint_{\mathcal{B}_R}|Da|^{\frac{\kappa(1+\delta)}{\delta}}\ dx\right)^{\frac{\delta}{1+\delta}}\left(\dashint_{\mathcal{B}_{2R}}|Du|^{p(x)}\ dx\right)\\
&\quad\le CR^{n+\kappa}\left(\dashint_{\mathcal{B}_{R}}|Da|^{\frac{\kappa(1+\delta)}{\delta}}\ dx\right)^{\frac{\delta}{1+\delta}}\left(\dashint_{\mathcal{B}_{2R}}|Du|^{p(x)}\ dx\right),\notag
\end{split}
\end{equation}
which, using that
\begin{equation}
\begin{split}
\left(\dashint_{\mathcal{B}_{R}}|Da|^{\frac{\kappa(1+\delta)}{\delta}}\ dx\right)^{\frac{\delta}{1+\delta}}&\le C\left(\left(\dashint_{\mathcal{B}_R}|Da|^q\ dx\right)^{\frac{\kappa(1+\delta)}{\delta q}}\right)^{\frac{\delta}{1+\delta}}\\
&\le C\left(\dashint_{\mathcal{B}_R}|Da|^q\ dx\right)^{\frac{\kappa}{q}}\\
&\le CR^{-\kappa}\left(R^{q}\dashint_{\mathcal{B}_R}|Da|^q\ dx\right)^{\frac{\kappa}{q}},\notag
\end{split}
\end{equation}
can be rewritten in the form
\begin{equation}\label{eq3.7}
\begin{split}
&CR^n\left(\dashint_{\mathcal{B}_R}\big|a(x)-(a)_R\big|^{\frac{\kappa}{\delta}(1+\delta)}\ dx\right)^{\frac{\delta}{1+\delta}}\left(\dashint_{\mathcal{B}_R}|Du|^{(1+\delta)p(x)}\ dx\right)^{\frac{1}{1+\delta}}\\
&\le CR^{n+\kappa}\left(R^{-\kappa}\left(R^{q}\dashint_{\mathcal{B}_R}|Da|^q\ dx\right)^{\frac{\kappa}{q}}\right)\left(\dashint_{\mathcal{B}_{2R}}|Du|^{p(x)}\ dx\right)\\
&\le C\left(R^q\dashint_{\mathcal{B}_R}|Da|^q\ dx\right)^{\frac{\kappa}{q}}\int_{\mathcal{B}_{2R}}|Du|^{
\rho_2(4R)}\ dx+CR^n\left(R^q\dashint_{\mathcal{B}_R}|Da|^q\ dx\right)^{\frac{\kappa}{q}}.
\end{split}
\end{equation}
Note that in \eqref{eq3.7} we have used the estimate
\begin{equation}
\begin{split}
C\left(R^q\dashint_{\mathcal{B}_R}|Da|^q\ dx\right)^{\frac{\kappa}{q}}\dashint_{\mathcal{B}_{2R}}|Du|^{p(x)}&\le C\left(R^q\dashint_{\mathcal{B}_R}|Da|^q\ dx\right)^{\frac{\kappa}{q}}\dashint_{\mathcal{B}_{2R}}\big(1+|Du|\big)^{p(x)}\\
&\le C\left(R^q\dashint_{\mathcal{B}_R}|Da|^q\ dx\right)^{\frac{\kappa}{q}}\dashint_{\mathcal{B}_{2R}}\big(1+|Du|\big)^{\rho_2(4R)}.\notag
\end{split}
\end{equation}
In addition, we have also used that $\displaystyle\kappa<\frac{\delta q}{1+\delta}$ so that $\displaystyle\frac{\delta q}{(1+\delta)\kappa}>1$.  In summary, then, our estimate from above for quantity $I'$ is
\begin{equation}
\begin{split}
I'=C\int_{\mathcal{B}_R}\big|a(x)-(a)_R\big||Du|^{p(x)}\ dx&\le C\left(R^q\dashint_{\mathcal{B}_R}|Da|^q\ dx\right)^{\frac{\kappa}{q}}\int_{\mathcal{B}_{2R}}|Du|^{\rho_2(4R)}\ dx\\
&\quad\quad\quad\quad\quad\quad\quad\quad\quad\quad\quad\quad+CR^n\left(R^q\dashint_{\mathcal{B}_R}|Da|^q\ dx\right)^{\frac{\kappa}{q}}.\notag
\end{split}
\end{equation}

At the same time, to provide an upper bound for quantity $I''$ we recall that (see \cite[(7)]{cosciamingione1}, for example)
\begin{equation}
\left||Du|^{\rho_2(4R)}-|Du|^{p(x)}\right|\le CR^{\sigma}\left|1+|Du|^{(1+\varepsilon)\rho_2(4R)}\right|,\notag
\end{equation}
and so, write
\begin{equation}
\begin{split}
I''=C\int_{\mathcal{B}_R}\big|(a)_R\big|\left||Du|^{\rho_2(4R)}-|Du|^{p(x)}\right|\ dx&\le CR^{n+\sigma}\dashint_{\mathcal{B}_R}\left(1+|Du|^{(1+\varepsilon)\rho_2(4R)}\right)\ dx\\
&\le CR^{n+\sigma}+CR^{n+\sigma}\dashint_{\mathcal{B}_R}|Du|^{(1+\varepsilon)\rho_2(4R)}\ dx,\notag
\end{split}
\end{equation}
where we have used the fact that $x\mapsto a(x)$ is bounded so that $\big|(a)_R\big|<+\infty$ for all admissible $R>0$.  In addition, using the reverse H\"{o}lder inequality for $u$ together with the definition of the numbers $\delta$ and $\varepsilon$ allows us to write
\begin{equation}
\dashint_{\mathcal{B}_R}|Du|^{(1+\varepsilon)\rho_2(4R)}\ dx\le C+C\dashint_{\mathcal{B}_R}|Du|^{(1+\delta)p(x)}\ dx\le C+C\left(\dashint_{\mathcal{B}_{2R}}|Du|^{p(x)}\ dx\right)^{1+\delta}\notag
\end{equation}
so that
\begin{equation}
\begin{split}
I''=C\int_{\mathcal{B}_R}\big|a(x)\big|\left||Du|^{\rho_2(4R)}-|Du|^{p(x)}\right|\ dx&\le CR^{n+\sigma}+CR^{n+\sigma}\left(\dashint_{\mathcal{B}_{2R}}|Du|^{p(x)}\ dx\right)^{1+\delta}\\
&\le CR^{n+\sigma}+CR^{n+\sigma}\left(1+\dashint_{\mathcal{B}_{2R}}|Du|^{\rho_2(4R)}\ dx\right)^{1+\delta}\\
&\le CR^{n+\sigma}+CR^{\sigma-n\delta}\int_{\mathcal{B}_{2R}}|Du|^{\rho_2(4R)}\ dx,\notag
\end{split}
\end{equation}
using that $Du$ is locally an element of $L^{\rho_2(4R)}(\Omega)$ to deduce that
\begin{equation}
\left(\int_{\mathcal{B}_{2R}}|Du|^{\rho_2(4R)}\ dx\right)^{1+\delta}\le C\int_{\mathcal{B}_{2R}}|Du|^{\rho_2(4R)}\ dx.\notag
\end{equation}
Therefore, from the preceding upper bounds for $I'$ and $I''$ we deduce that
\begin{equation}\label{eq3.8}
\begin{split}
I&\le CR^{n+\sigma}+CR^{\sigma-n\delta}\int_{\mathcal{B}_{2R}}|Du|^{\rho_2(4R)}\ dx+C\left(R^q\dashint_{\mathcal{B}_R}|Da|^q\ dx\right)^{\frac{\kappa}{q}}\int_{\mathcal{B}_{2R}}|Du|^{\rho_2(4R)}\ dx\\
&\quad\quad\quad\quad\quad\quad\quad\quad\quad\quad\quad\quad\quad\quad\quad\quad\quad\quad\quad\quad\quad\quad\quad\quad\quad\quad
+CR^n\left(R^q\dashint_{\mathcal{B}_R}|Da|^q\ dx\right)^{\frac{\kappa}{q}}\\
&\le CR^n+C\left[R^{\sigma-n\delta}+\left(R^q\dashint_{\mathcal{B}_R}|Da|^q\ dx\right)^{\frac{\kappa}{q}}\right]\int_{\mathcal{B}_{2R}}|Du|^{\rho_2(4R)}\ dx+CR^n\left(R^q\dashint_{\mathcal{B}_R}|Da|^q\ dx\right)^{\frac{\kappa}{q}},
\end{split}
\end{equation}
using that $R^{n+\sigma}<R^n$.  And this completes the upper bound for the quantity $I$.

On the other hand, to estimate $II$, by writing as in \eqref{eq3.5} that
\begin{equation}
\big|a(x)-(a)_R\big|\le C\big|a(x)-(a)_R\big|^{\kappa},\notag
\end{equation}
we note that
\begin{equation}\label{eq3.9}
\begin{split}
II&=\frac{C}{(a)_R}\int_{\mathcal{B}_R}a(x)|Du|^{p(x)}-(a)_R|Dw|^{p(x)}\ dx\\
&\le\frac{C}{(a)_R}\int_{\mathcal{B}_R}a(x)|Dw|^{p(x)}-(a)_R|Dw|^{p(x)}\ dx\\
&\le C\int_{\mathcal{B}_R}\big|a(x)-(a)_R\big|^{\kappa}|Dw|^{p(x)}\ dx\\
&\le CR^n\left(\dashint_{\mathcal{B}_R}|Dw|^{p(x)(1+\delta)}\ dx\right)^{\frac{1}{1+\delta}}\left(\dashint_{\mathcal{B}_R}\big|a(x)-(a)_R\big|^{\frac{\kappa(1+\delta)}{\delta}}\ dx\right)^{\frac{\delta}{1+\delta}}\\
&\le CR^n\left(\dashint_{\mathcal{B}_R}|Dw|^{p(x)(1+\delta)}\ dx\right)^{\frac{1}{1+\delta}}\left(\left(\dashint_{\mathcal{B}_R}\big|a(x)-(a)_R\big|^{q}\ dx\right)^{\frac{\kappa(1+\delta)}{q\delta}}\right)^{\frac{\delta}{1+\delta}}\\
&\le CR^n\left(\dashint_{\mathcal{B}_R}|Dw|^{p(x)(1+\delta)}\ dx\right)^{\frac{1}{1+\delta}}\left(R^q\dashint_{\mathcal{B}_R}|Da|^q\ dx\right)^{\frac{\kappa}{q}},
\end{split}
\end{equation}
where we have used both H\"{o}lder's inequality and Poincar\'{e}'s inequality to estimate the second factor appearing above.  We have also used that
\begin{equation}
\int_{\mathcal{B}_R}a(x)|Du|^{p(x)}\ dx\le\int_{\mathcal{B}_R}a(x)|Dw|^{p(x)}\ dx\notag
\end{equation}
by means of the minimality of $u$.  Then the first factor on the right-hand side of inequality \eqref{eq3.9} can be estimated from above by writing
\begin{equation}
\begin{split}
&C\left(\dashint_{\mathcal{B}_R}|Dw|^{p(x)(1+\delta)}\ dx\right)^{\frac{1}{1+\delta}}\\
&\quad\le C\left(\dashint_{\mathcal{B}_R}|Du-Dw|^{p(x)(1+\delta)}\ dx+\dashint_{\mathcal{B}_R}|Du|^{p(x)(1+\delta)}\ dx\right)^{\frac{1}{1+\delta}}\\
&\quad\le C\Bigg(\left(\dashint_{\mathcal{B}_R}|Du-Dw|^{p(x)}\ dx\right)^{1+\delta}+C\dashint_{\mathcal{B}_{2R}}\left(1+|Du|^{p(x)}\right)^{1+\delta}\ dx\\
&\quad\quad\quad\quad\quad\quad\quad\quad\quad\quad\quad\quad\quad\quad\quad\quad\quad\quad\quad\quad\quad\quad\quad\quad\quad+\left(\dashint_{\mathcal{B}_{2R}}|Du|^{p(x)}\ dx\right)^{1+\delta}\Bigg)^{\frac{1}{1+\delta}}\\
&\quad\le C+C\dashint_{\mathcal{B}_R}|Du|^{p(x)}\ dx+C\dashint_{\mathcal{B}_R}|Dw|^{p(x)}\ dx+C\left(\dashint_{\mathcal{B}_{2R}}|Du|^{p(x)(1+\delta)}\ dx\right)^{\frac{1}{1+\delta}}\\
&\quad\quad\quad\quad\quad\quad\quad\quad\quad\quad\quad\quad\quad\quad\quad\quad\quad\quad\quad\quad\quad\quad\quad\quad\quad\quad\quad\quad\quad\quad+C\dashint_{\mathcal{B}_{2R}}|Du|^{p(x)}\ dx\\
&\quad\le C+C\dashint_{\mathcal{B}_R}|Du|^{p(x)}\ dx+C\left(\dashint_{\mathcal{B}_{2R}}|Du|^{p(x)(1+\delta)}\ dx\right)^{\frac{1}{1+\delta}}+\dashint_{\mathcal{B}_{2R}}|Du|^{p(x)}\ dx,\notag
\end{split}
\end{equation}
where we have used the boundary-type reverse H\"{o}lder inequality for $w$ together with the minimality of $w$ in the final inequality to write
\begin{equation}
\int_{\mathcal{B}_R}|Dw|^{p(x)}\ dx\le\int_{\mathcal{B}_R}|Du|^{p(x)}\ dx.\notag
\end{equation}
In addition, we can reapply the reverse H\"{o}lder inequality for $u$ to write
\begin{equation}
\begin{split}
C\left(\dashint_{\mathcal{B}_{2R}}|Du|^{p(x)(1+\delta)}\ dx\right)^{\frac{1}{1+\delta}}&\le C\left(\left(\dashint_{\mathcal{B}_{4R}}|Du|^{p(x)}\right)^{1+\delta}\right)^{\frac{1}{1+\delta}}\\
&\le C\dashint_{\mathcal{B}_{4R}}|Du|^{p(x)}\ dx.\notag
\end{split}
\end{equation}
Therefore, all in all, we deduce that
\begin{equation}\label{eq3.10}
\begin{split}
II&\le CR^n\left(\dashint_{\mathcal{B}_R}|Dw|^{p(x)(1+\delta)}\ dx\right)^{\frac{1}{1+\delta}}\left(R^q\dashint_{\mathcal{B}_R}|Da|^q\ dx\right)^{\frac{\kappa}{q}}\\
&\le CR^n\left(C+C\dashint_{\mathcal{B}_R}|Du|^{p(x)}\ dx+C\left(\dashint_{\mathcal{B}_{2R}}|Du|^{p(x)(1+\delta)}\ dx\right)^{\frac{1}{1+\delta}}\right)\left(R^q\dashint_{\mathcal{B}_R}|Da|^q\ dx\right)^{\frac{\kappa}{q}}\\
&\le CR^n\left(1+\dashint_{\mathcal{B}_{4R}}|Du|^{p(x)}\ dx\right)\left(R^q\dashint_{\mathcal{B}_R}|Da|^q\ dx\right)^{\frac{\kappa}{q}}\\
&\le CR^n\left(R^q\dashint_{\mathcal{B}_R}|Da|^q\ dx\right)^{\frac{\kappa}{q}}+C\left(R^q\dashint_{\mathcal{B}_R}|Da|^q\ dx\right)^{\frac{\kappa}{q}}\int_{\mathcal{B}_{4R}}|Du|^{p(x)}\ dx\\
&\le CR^n\left(R^q\dashint_{\mathcal{B}_R}|Da|^q\ dx\right)^{\frac{\kappa}{q}}+C\left(R^q\dashint_{\mathcal{B}_R}|Da|^q\ dx\right)^{\frac{\kappa}{q}}\int_{\mathcal{B}_{4R}}|Du|^{\rho_2(4R)}\ dx.
\end{split}
\end{equation}
Thus, inequality \eqref{eq3.10} is our upper bound for quantity $II$.

As for quantity $III$ we estimate
\begin{equation}\label{eq3.11}
\begin{split}
III=\frac{C}{(a)_R}\int_{\mathcal{B}_R}(a)_R|Dw|^{p(x)}-(a)_R|Dw|^{\rho_2(4R)}\ dx&\le C\int_{\mathcal{B}_R}\left||Dw|^{p(x)}-|Dw|^{\rho_2(4R)}\right|\ dx\\
&\le CR^{n+\sigma}\dashint_{\mathcal{B}_R}\left(1+|Dw|^{(1+\varepsilon)\rho_2(4R)}\right)\ dx\\
&\le CR^{n+\sigma}+CR^{n+\sigma}\dashint_{\mathcal{B}_R}|Dw|^{(1+\varepsilon)\rho_2(4R)}\ dx\\
&\le CR^{n+\sigma}+CR^{n+\sigma}\dashint_{\mathcal{B}_R}|Dw|^{(1+\delta)p(x)}\ dx,
\end{split}
\end{equation}
where we have again used the relationship between $\delta$ and $\varepsilon$.  Now, to estimate the second term appearing on the right-hand side of inequality \eqref{eq3.11} we write
\begin{equation}
\begin{split}
&CR^{n+\sigma}\dashint_{\mathcal{B}_R}|Dw|^{(1+\delta)p(x)}\ dx\\
&\quad\le CR^{n+\sigma}\left(\left(\dashint_{\mathcal{B}_R}|Du-Dw|^{p(x)}\ dx\right)^{1+\delta}+\dashint_{\mathcal{B}_R}\left(1+|Du|^{p(x)}\right)^{1+\delta}\ dx\right)\\
&\quad\le CR^{n+\sigma}\left(\left(\dashint_{\mathcal{B}_R}|Du|^{p(x)}\ dx\right)^{1+\delta}+\left(\dashint_{\mathcal{B}_{2R}}|Du|^{(1+\delta)p(x)}\ dx\right)\right)+CR^{n+\sigma}\\
&\quad\le CR^{n+\sigma}+CR^{n+\sigma}\left(\left(\dashint_{\mathcal{B}_{4R}}|Du|^{\rho_2(4R)}\ dx\right)^{1+\delta}+\left(\dashint_{\mathcal{B}_{4R}}|Du|^{p(x)}\ dx\right)^{1+\delta}\right)\\
&\quad\le CR^{n+\sigma}+CR^{n+\sigma}\left(\dashint_{\mathcal{B}_{4R}}|Du|^{\rho_2(4R)}\ dx\right)^{1+\delta},\notag
\end{split}
\end{equation}
where we have used the reverse H\"{o}lder inequalities both for $u$ and for $w$.  Therefore, we see that inequality \eqref{eq3.11} can be written in the form
\begin{equation}\label{eq3.12}
III\le CR^{n+\sigma}+CR^{n+\sigma}\left(\dashint_{\mathcal{B}_{4R}}|Du|^{\rho_2(4R)}\ dx\right)^{1+\delta}.
\end{equation}

Therefore, putting estimates \eqref{eq3.8}, \eqref{eq3.10}, and \eqref{eq3.12} into inequality \eqref{eq3.3} we finally arrive at the estimate
\begin{equation}\label{eq3.13}
\begin{split}
&\int_{\mathcal{B}_{\rho}}|Du|^{\rho_2(4R)}\ dx\\
&\le C\left(\frac{\rho}{R}\right)^{n-\tau}\left[\int_{\mathcal{B}_R}|Dw|^{\rho_2(4R)}\ dx+R^{n-\tau}\right]+I+II+III\\
&\quad\quad+C\int_{\mathcal{B}_R}\frac{\partial}{\partial\xi}\left[|Dw|^{p(x)}-|Dw|^{\rho_2(4R)}\right]:[Du-Dw]\ dx\\
&\le C\left(\frac{\rho}{R}\right)^{n-\tau}\left[\int_{\mathcal{B}_R}|Dw|^{\rho_2(4R)}\ dx+R^{n-\tau}\right]+CR^n\\
&\quad\quad+C\left[R^{\sigma-n\delta}+\left(R^q\dashint_{\mathcal{B}_R}|Da|^q\ dx\right)^{\frac{\kappa}{q}}\right]\int_{\mathcal{B}_{4R}}|Du|^{\rho_2(4R)}\ dx\\
&\quad\quad\quad\quad+CR^n\left(R^q\dashint_{\mathcal{B}_R}|Da|^q\ dx\right)^{\frac{\kappa}{q}}\\
&\quad\quad\quad\quad\quad\quad+C\int_{\mathcal{B}_R}\frac{\partial}{\partial\xi}\left[|Dw|^{p(x)}-|Dw|^{\rho_2(4R)}\right]:[Du-Dw]\ dx,
\end{split}
\end{equation}
where in \eqref{eq3.13} we have used the calculation
\begin{equation}
CR^{n+\sigma}\left(\dashint_{\mathcal{B}_{4R}}|Du|^{\rho_2(4R)}\ dx\right)^{1+\delta}\le CR^{\sigma-n\delta}\int_{\mathcal{B}_{4R}}|Du|^{\rho_2(4R)}\ dx.\notag
\end{equation}
It remains to estimate each of the terms
\begin{equation}\label{eq3.14}
C\int_{\mathcal{B}_R}\frac{\partial}{\partial\xi}\left[|Dw|^{\rho_2(4R)}-|Dw|^{p(x)}\right]:[Du-Dw]\ dx
\end{equation}
and
\begin{equation}\label{eq3.15}
\int_{\mathcal{B}_R}|Dw|^{\rho_2(4R)}\ dx,
\end{equation}
each of which appears in inequality \eqref{eq3.13}.

To estimate the quantity \eqref{eq3.14} we begin by writing
\begin{equation}
\begin{split}
&C\int_{\mathcal{B}_R}\frac{\partial}{\partial\xi}\left[|Dw|^{\rho_2(4R)}-|Dw|^{p(x)}\right]:[Du-Dw]\ dx\\
&\quad\le C\int_{\mathcal{B}_R}\left\{\big|\rho_2(4R)-p(x)\big||Dw|^{\rho_2(4R)-1}+p(x)\left||Dw|^{\rho_2(4R)-1}-|Dw|^{p(x)-1}\right|\right\}|Du-Dw|\ dx\\
&\quad\le CR^{\sigma}\int_{\mathcal{B}_R}\big(1+|Dw|\big)^{(1+\varepsilon)\left(\rho_2(4R)-1\right)}|Du-Dw|\ dx\\
&\quad\le\frac{1}{2}\int_{\mathcal{B}_R}|Du-Dw|^{\rho_2(4R)}\ dx+CR^{\sigma}\int_{\mathcal{B}_R}\big(1+|Dw|\big)^{(1+\varepsilon)\rho_2(4R)}\ dx,\notag
\end{split}
\end{equation}
where we have used some of the corresponding estimates in Ragusa, et al. \cite{ragusa5} 
-- in particular, we have used Young's inequality to obtain the final inequality.  Now recall from \eqref{eq3.3} that we there estimated
\begin{equation}
\begin{split}
C\int_{\mathcal{B}_R}|Du-Dw|^{\rho_2(4R)}\ dx&\le C\int_{\mathcal{B}_R}\Big\{|Du|^{\rho_2(4R)}-|Dw|^{\rho_2(4R)}\Big\}\ dx\\
&\quad\quad+C\int_{\mathcal{B}_R}\frac{\partial}{\partial\xi}\left[|Dw|^{\rho_2(4R)}-|Dw|^{p(x)}\right]:[Du-Dw]\ dx.\notag
\end{split}
\end{equation}
So, in particular, taken together the preceding estimates show that the quantity
\begin{equation}
\frac{1}{2}\int_{\mathcal{B}_R}|Du-Dw|^{\rho_2(4R)}\ dx\notag
\end{equation}
can be absorbed into the left-hand side of \eqref{eq3.13} without affecting the subsequent estimates -- e.g., by simply taking the constant $C$ large enough in a suitable manner.  In addition, we may write
\begin{equation}\label{eq3.16}
\begin{split}
&CR^{\sigma}\int_{\mathcal{B}_R}\big(1+|Dw|\big)^{(1+\varepsilon)\rho_2(4R)}\ dx\\
&\quad\le CR^{\sigma}\left[CR^n+C\int_{\mathcal{B}_R}|Dw|^{(1+\varepsilon)\rho_2(4R)}\ dx\right]\\
&\quad\le CR^{\sigma}\left[CR^n+CR^n\dashint_{\mathcal{B}_R}|Dw|^{(1+\delta)p(x)}\ dx\right]\\
&\quad\le CR^n+CR^{n+\sigma}\left[\dashint_{\mathcal{B}_R}|Du-Dw|^{(1+\delta)p(x)}\ dx+\dashint_{\mathcal{B}_R}|Du|^{(1+\delta)p(x)}\ dx\right]\\
&\quad\le CR^n+CR^{n+\sigma}\Bigg[\left(\dashint_{\mathcal{B}_R}|Du-Dw|^{p(x)}\ dx\right)^{1+\delta}+\dashint_{\mathcal{B}_{2R}}\left(1+|Du|^{p(x)}\right)^{1+\delta}\ dx\\
&\quad\quad\quad\quad\quad\quad\quad\quad\quad\quad\quad\quad\quad\quad\quad\quad\quad\quad\quad\quad\quad\quad\quad\quad+\left(\dashint_{\mathcal{B}_{2R}}|Du|^{p(x)}\ dx\right)^{1+\delta}\Bigg]\\
&\quad\le CR^n+CR^{n+\sigma}\Bigg[\left(\dashint_{\mathcal{B}_{2R}}|Du|^{p(x)}\ dx\right)^{1+\delta}+\dashint_{\mathcal{B}_{2R}}|Du|^{(1+\delta)p(x)}\ dx\Bigg]\\
&\quad\le CR^n+CR^{n+\sigma}\left[\left(\dashint_{\mathcal{B}_{4R}}|Du|^{p(x)}\ dx\right)^{1+\delta}\right]\\
&\quad\le CR^n+CR^{\sigma-n\delta}\int_{\mathcal{B}_{4R}}|Du|^{\rho_2(4R)}\ dx,
\end{split}
\end{equation}
where we again use the boundary-type reverse H\"{o}lder inequality both for $w$ and for $u$, and where we note, also once again, that
\begin{equation}
\left(\int_{\mathcal{B}_{4R}}|Du|^{\rho_2(4R)}\ dx\right)^{1+\delta}\le C\int_{\mathcal{B}_{4R}}|Du|^{\rho_2(4R)}\ dx,\notag
\end{equation}
with $C$ independent of $R$, which is due to the fact that (see \cite{cosciamingione1})
\begin{equation}
u\in W^{1,(1+\varepsilon)\rho_2(4R)}\big(\mathcal{B}_{4R}\big)\notag
\end{equation}
for all $\varepsilon>0$ sufficiently small.  We have also used in \eqref{eq3.16} the minimality of $w$.

On the other hand, in order to estimate quantity \eqref{eq3.15} we use an idea from Ragusa, Tachikawa, and Takabayashi \cite{ragusa5}.  In particular, we first recall that $p\in\mathscr{C}^{0,\sigma}(\Omega)$ for some $\sigma\in(0,1)$.  In particular, then, it holds that
\begin{equation}
\big|p(x)-p(y)\big|\le C|x-y|^{\sigma}=:\omega_1\big(|x-y|\big),\notag
\end{equation}
for each $x$, $y\in\Omega$ with $C$ independent of $x$ and $y$.  Notice that $t\mapsto\omega_1(t)$ is an increasing, concave, continuous map satisfying $\omega_1(0)=0$.  This means that for $R>0$ sufficiently small we have that
\begin{equation}
0\le\omega_1(2R)<\delta_0,\notag
\end{equation}
where $\delta_0$ is the upper bound on the amount of higher integrability in the reverse H\"{o}lder inequality for $w$.  Then we see that
\begin{equation}
\begin{split}
\int_{\mathcal{B}_R}|Dw|^{\rho_2(4R)}\ dx&\le CR^n+C\int_{\mathcal{B}_R}|Dw|^{(1+\varepsilon)\rho_2(4R)}\ dx\\
&\le CR^n+CR^n\dashint_{\mathcal{B}_R}|Dw|^{\left(1+\omega_1(2R)\right)p(x)}\ dx\\
&\le CR^n+CR^n\left(\dashint_{\mathcal{B}_{4R}}|Du|^{p(x)}\right)^{1+\omega_1(2R)}\\
&\le CR^n+CR^n\left(\dashint_{\mathcal{B}_{4R}}|Du|^{\rho_2(4R)}\ dx\right)^{1+\omega_1(2R)}\\
&\le CR^n+CR^{-n\omega_1(2R)}\int_{\mathcal{B}_{4R}}|Du|^{\rho_2(4R)}\ dx,\notag
\end{split}
\end{equation}
where we have used the same sequence of estimates as in \eqref{eq3.16} above and, in addition, we have used the calculation
\begin{equation}
\begin{split}
CR^n\left(\dashint_{\mathcal{B}_{4R}}|Du|^{\rho_2(4R)}\ dx\right)^{1+\omega_1(2R)}&\le CR^{n}\left(R^{-n}\int_{\mathcal{B}_{4R}}|Du|^{\rho_2(4R)}\ dx\right)^{1+\omega_1(2R)}\\
&=CR^{-n\omega_1(2R)}\left(\int_{\mathcal{B}_{4R}}|Du|^{\rho_2(4R)}\ dx\right)^{1+\omega_1(2R)}\\
&\le CR^{-n\omega_1(2R)}\int_{\mathcal{B}_{4R}}|Du|^{\rho_2(4R)}\ dx.\notag
\end{split}
\end{equation}
So, in light of this, we are led to the estimate
\begin{equation}\label{eq3.17}
\begin{split}
&\int_{\mathcal{B}_{\rho}}|Du|^{\rho_2(4R)}\ dx\\
&\le C\left(\frac{\rho}{R}\right)^{n-\tau}\left[\left(R^n+R^{-n\delta}\int_{\mathcal{B}_{4R}}|Du|^{\rho_2(4R)}\ dx\right)+R^{n-\tau}\right]+CR^n\\
&\quad\quad+C\left[R^{\sigma-n\delta}+\left(R^q\dashint_{\mathcal{B}_R}|Da|^q\ dx\right)^{\frac{\kappa}{q}}\right]\int_{\mathcal{B}_{4R}}|Du|^{\rho_2(4R)}\ dx\\
&\quad\quad\quad\quad+CR^n\left(R^q\dashint_{\mathcal{B}_R}|Da|^q\ dx\right)^{\frac{\kappa}{q}}\\
&\quad\quad\quad\quad\quad\quad\quad\quad+CR^n+CR^{\sigma-n\delta}\int_{\mathcal{B}_{4R}}|Du|^{\rho_2(4R)}\ dx\\
&\le CR^{n-\tau}+C\left[\left(\frac{\rho}{R}\right)^{n-\tau}R^{-n\delta}+R^{\sigma-n\delta}+\left(R^q\dashint_{\mathcal{B}_R}|Da|^q\ dx\right)^{\frac{\kappa}{q}}\right]\int_{\mathcal{B}_{4R}}|Du|^{\rho_2(4R)}\ dx\\
&\quad\quad\quad\quad+CR^n\left(R^q\dashint_{\mathcal{B}_R}|Da|^q\ dx\right)^{\frac{\kappa}{q}}.
\end{split}
\end{equation}

Now, notice that
\begin{equation}
\omega_1(2R)<c_1R^{\sigma},\notag
\end{equation}
where $c_1$ is some positive constant independent of $R$.  Then
\begin{equation}
R^{-n\omega_1(2R)}<R^{-nc_1R^{\sigma}},\notag
\end{equation}
and so, an application of L'H\^{o}pital's Rule implies that
\begin{equation}
\lim_{R\to0^+}R^{-n\omega_1(2R)}\le\lim_{R\to0^+}R^{-nc_1R^{\sigma}}=1,\notag
\end{equation}
using that $\sigma>0$.  This means that there exists a constant $c_2$, independent of $R$, such that
\begin{equation}
R^{-n\delta}<c_2<+\infty.\notag
\end{equation}
Consequently, in inequality \eqref{eq3.17} we see that
\begin{equation}
\begin{split}
&C\left[\left(\frac{\rho}{R}\right)^{n-\tau}R^{-n\omega_1(2R)}+R^{\sigma-n\delta}+\left(R^q\dashint_{\mathcal{B}_R}|Da|^q\ dx\right)^{\frac{\kappa}{q}}\right]\int_{\mathcal{B}_{4R}}|Du|^{\rho_2(4R)}\ dx\\
&\quad\quad\quad\quad\quad\quad\le C\left[\left(\frac{\rho}{R}\right)^{n-\tau}+R^{\sigma-n\delta}+\left(R^q\d
ashint_{\mathcal{B}_R}|Da|^q\ dx\right)^{\frac{\kappa}{q}}\right]\int_{\mathcal{B}_{4R}}|Du|^{\rho_2(4R)}\ dx.\notag
\end{split}
\end{equation}

Now we wish to inductively iterate estimate \eqref{eq3.17}.  Since this procedure occurs in a relatively standard way we only sketch a few details for completeness.  So, define the function $\varphi\ : \ [0,+\infty)\rightarrow[0,+\infty)$ by
\begin{equation}
\varphi(R):=\int_{\mathcal{B}_R}\big(1+|Du|\big)^{\rho_2(4R)}\ dx.\notag
\end{equation}
Then inequality \eqref{eq3.17} may be rewritten in the form
\begin{equation}\label{eq3.18}
\begin{split}
\varphi(\rho)&\le CR^{n-\tau}+C\left[\left(\frac{\rho}{R}\right)^{n-\tau}+R^{\sigma-n\delta}+\left(R^q\dashint_{\mathcal{B}_R}|Da|^q\ dx\right)^{\frac{\kappa}{q}}\right]\varphi(4R)\\
&\quad\quad\quad\quad\quad\quad\quad\quad\quad\quad\quad\quad\quad\quad\quad\quad\quad\quad\quad\quad\quad\quad\quad+CR^n\left(R^q\dashint_{\mathcal{B}_R}|Da|^q\ dx\right)^{\frac{\kappa}{q}}.
\end{split}
\end{equation}
Since $x_0\notin\Omega\setminus\Omega_0$, it follows that for any number $\varepsilon_1>0$ we may select $R>0$ sufficiently small such that
\begin{equation}\label{eq3.19}
\left(R^q\dashint_{\mathcal{B}_R}|Da|^q\ dx\right)^{\frac{\kappa}{q}}<\varepsilon_1.
\end{equation}
Furthermore, observe that $C$ is henceforth fixed.

Fix the number $\alpha\in\big(n-\gamma_1,n\big)$ and then choose $\displaystyle\eta\in\left(0,\frac{1}{4}\right)$ and $R>0$ such that
\begin{equation}\label{eq3.20} C\left[\eta^{n-\tau}+R^{\sigma-n\delta}+\varepsilon_1\right]<\eta^{\alpha},
\end{equation}
where we use inequality \eqref{eq3.19}; this then fixes the value of $R$, say $R<R_0$.  Then using \eqref{eq3.20} in \eqref{eq3.18} implies that
\begin{equation}\label{eq3.21}
\begin{split}
\varphi(\rho)&\le CR^{n-\tau}+\eta^{\alpha}\varphi(4R)+CR^n\left(R^q\dashint_{\mathcal{B}_R}|Da|^q\ dx\right)^{\frac{\kappa}{q}}\\
&\le CR^{n-\tau}+\eta^{\alpha}\varphi(4R),
\end{split}
\end{equation}
using again that
\begin{equation}
\left(R^q\dashint_{\mathcal{B}_R}|Da|^q\ dx\right)^{\frac{\kappa}{q}}<1\notag
\end{equation}
for some  $R$ sufficiently small since $x_0\notin\Omega\setminus\Omega_0$.  
In addition, we note that
\begin{equation}\label{eq3.22aaa}
\varphi(\rho)=\int_{\mathcal{B}_{\rho}}\big(1+|Du|\big)^{\rho_2(4R)}\ dx\ge\int_{\mathcal{B}_{\rho}}\big(1+|Du|\big)^{\gamma_1}\ dx\ge\int_{\mathcal{B}_{\rho}}|Du|^{\gamma_1}\ dx.
\end{equation}
Then putting estimates \eqref{eq3.21} and \eqref{eq3.22aaa} together we arrive at
\begin{equation}\label{eq3.23aaa}
\int_{\mathcal{B}_{\rho}}|Du|^{\gamma_1}\ dx\le\left(\frac{\rho}{R}\right)^{\alpha}\int_{\mathcal{B}_{4R}}\big(1+|Du|\big)^{\rho_2(4R)}\ dx+CR^{n-\tau}.
\end{equation}
Upon dividing both sides of inequality \eqref{eq3.23aaa} by $\rho^{\alpha}$, executing a standard inductive iteration procedure (see, for example, the conclusion of the proofs of \cite[Theorem 3.1]{fossgoodrich1}, \cite[Theorem 3.1]{goodrich1}, or \cite[Theorem 3.1]{goodrich4}), and then taking the supremum over all $\displaystyle\rho\in\left(0,\frac{1}{4}R_0\right)$, where $R_0$ is the fixed value of $R$, it follows that $Du$ satisfies the Morrey regularity statement $Du\in L_{\text{loc}}^{\gamma_1,\alpha}$.  Consequently, as a standard consequence of this observation we conclude that there exists an open set $\Omega_0\subseteq\Omega$ such that for each $\alpha\in\big(n-\gamma_1,n\big)$ it holds that
\begin{equation}
u\in\mathscr{C}_{\text{loc}}^{0,1-\frac{n-\alpha}{\gamma_1}}\big(\Omega_0\big),\notag
\end{equation}
where $\big|\Omega\setminus\Omega_0\big|=0$ since
\begin{equation}
\Omega\setminus\Omega_0\subseteq\left\{x\in\Omega\ : \ \liminf_{R\to0^+}R^q\dashint_{\mathcal{B}_R}|Da|^q\ dx>0\right\}\notag
\end{equation}
with
\begin{equation}
\left|\left\{x\in\Omega\ : \ \liminf_{R\to0^+}R^q\dashint_{\mathcal{B}_R}|Da|^q\ dx>0\right\}\right|=0.\notag
\end{equation}
And this completes the proof.
\end{proof}

We conclude by applying the previous results to the elliptic system \eqref{eq1.1}, which shows that weak solutions $u\in W_{\text{loc}}^{1,p(x)}(\Omega)$ to the $p(x)$-Laplacian system
\begin{equation}
\nabla\cdot\left(a(x)|Du|^{p(x)-2}|Du|\right)=0\text{, a.e. }x\in\Omega\notag
\end{equation}
are also locally H\"{o}lder continuous under assumptions (A1)--(A2).  Since the proof of this is a consequence of the convexity of the integrand of integral functional \eqref{eq1.2}, we omit the proof of this result.

\begin{corollary}\label{corollary3.2}
Assume that conditions (A1)--(A2) hold.  Let $u\in W_{\text{loc}}^{1,p(x)}(\Omega)$ be a weak solution of the $p(x)$-Laplacian system
\begin{equation}
\nabla\cdot\left(a(x)|Du|^{p(x)-2}|Du|\right)=0\text{, a.e. }x\in\Omega.\notag
\end{equation}
Then there is an open set $\Omega_0\subseteq\mathbb{R}^{n}$ such that for each $0<\alpha<1$ it holds that $\big|\Omega\setminus\Omega_0\big|=0$ and $u\in\mathscr{C}_{\text{loc}}^{0,\alpha}\big(\Omega_0\big)$, where
\begin{equation}
\Omega\setminus\Omega_0\subseteq\left\{x\in\Omega\ : \ \liminf_{R\to0^+}R^q\dashint_{\mathcal{B}_R}|Da|^q\ dx>0\right\}.\notag
\end{equation}
\end{corollary}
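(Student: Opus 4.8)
I would deduce Corollary~\ref{corollary3.2} from Theorem~\ref{theorem3.1} by the classical device of turning the Euler--Lagrange equation \eqref{eq1.1} into a variational minimality property via convexity. Recall that $u\in W_{\text{loc}}^{1,p(x)}(\Omega;\mathbb{R}^{N})$ is a \emph{weak solution} of \eqref{eq1.1} if
\[
\int_{\Omega}a(x)|Du|^{p(x)-2}Du:D\varphi\ dx=0
\]
for every $\varphi\in\mathscr{C}^{\infty}_{0}(\Omega;\mathbb{R}^{N})$, hence — by density — for every $\varphi$ with $\varphi\in W_{0}^{1,p(x)}(\mathcal{B})$, $\mathcal{B}\Subset\Omega$; the integral is well defined because the bound $\big|a(x)|Du|^{p(x)-2}Du\big|\le a_{2}|Du|^{p(x)-1}$ together with $u\in W_{\text{loc}}^{1,p(x)}$ places its first factor in the (variable) dual exponent space locally. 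The point is that \eqref{eq1.1} is exactly the Euler--Lagrange equation of
\[
v\longmapsto\int_{\Omega}\frac{a(x)}{p(x)}\,|Dv|^{p(x)}\ dx,
\]
since $D_{\xi}\!\big(\tfrac{a(x)}{p(x)}|\xi|^{p(x)}\big)=a(x)|\xi|^{p(x)-2}\xi$ (a continuous function of $\xi$ on all of $\mathbb{R}^{N\times n}$, as $p(x)-1\ge\gamma_{1}-1\ge1$), and that for a.e.\ $x$ the integrand $\xi\mapsto\frac{a(x)}{p(x)}|\xi|^{p(x)}$ is convex, because $\frac{a(x)}{p(x)}>0$ and $\xi\mapsto|\xi|^{p(x)}$ is convex for $p(x)\ge1$. (If one prefers to read \eqref{eq1.1} literally as the Euler--Lagrange equation of \eqref{eq1.2}, i.e.\ as $\nabla\cdot\big(p(x)a(x)|Du|^{p(x)-2}Du\big)=0$, then the argument below applies directly to \eqref{eq1.2} and no further bookkeeping is required.)

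Granting this, I would fix a ball $\mathcal{B}=\mathcal{B}_{R}(x_{0})\Subset\Omega$ and a perturbation $\varphi\in W_{0}^{1,p(x)}(\mathcal{B})$, set $F(x,\xi):=\frac{a(x)}{p(x)}|\xi|^{p(x)}$, and use convexity in the $\xi$--variable in the form
\[
F\big(x,Du+D\varphi\big)\ \ge\ F\big(x,Du\big)+D_{\xi}F\big(x,Du\big):D\varphi\qquad\text{a.e.\ in }\mathcal{B}.
\]
Integrating over $\mathcal{B}$ and discarding the linear term by the weak formulation of \eqref{eq1.1} yields
\[
\int_{\mathcal{B}}F\big(x,Du+D\varphi\big)\ dx\ \ge\ \int_{\mathcal{B}}F\big(x,Du\big)\ dx,
\]
and since $\mathcal{B}\Subset\Omega$ and $\varphi$ are arbitrary, $u$ is a local minimizer of $v\mapsto\int_{\Omega}\frac{a(x)}{p(x)}|Dv|^{p(x)}\,dx$.

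The last step is to run Theorem~\ref{theorem3.1} for this functional in place of \eqref{eq1.2}. Its coefficient is $\tilde a(x):=a(x)/p(x)$, which obeys (A2.2) with the constants $a_{1}/\gamma_{2}$ and $a_{2}/\gamma_{1}$; the one genuine difference — and the point I would actually have to check — is that $\tilde a$ need not itself lie in $W_{\text{loc}}^{1,q}(\Omega)$, since $p$ is only Hölder continuous. However, (A2.1) enters the proof of Theorem~\ref{theorem3.1} only through Poincaré-type control of the oscillation of the coefficient on balls, and writing $\tilde a(x)-\tilde a(y)=\frac{a(x)-a(y)}{p(x)}+a(y)\big(\tfrac{1}{p(x)}-\tfrac{1}{p(y)}\big)$ and invoking (A1.2) on the second term gives, on $\mathcal{B}_{R}(x_{0})$ and for the range of exponents $s$ used there,
\[
\Big(\dashint_{\mathcal{B}_{R}}\big|\tilde a(x)-(\tilde a)_{R}\big|^{s}\ dx\Big)^{1/s}\ \le\ C\,R\Big(\dashint_{\mathcal{B}_{R}}|Da|^{q}\ dx\Big)^{1/q}+C\,R^{\sigma}.
\]
The extra summand $CR^{\sigma}$, once inserted into the estimates of the quantities $I$ and $II$ in that proof, contributes only terms of the same $R^{\sigma-n\delta}$ type already absorbed there (with $\delta$ chosen small), so the iteration goes through unchanged and $\Omega\setminus\Omega_{0}$ stays inside $\big\{x\in\Omega:\liminf_{R\to0^{+}}R^{q}\dashint_{\mathcal{B}_{R}}|Da|^{q}\,dx>0\big\}$. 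Theorem~\ref{theorem3.1} then provides an open $\Omega_{0}$ of full measure with $u\in\mathscr{C}_{\text{loc}}^{0,1-\frac{n-\alpha}{\gamma_{1}}}(\Omega_{0})$ for every $\alpha\in(n-\gamma_{1},n)$, and since $1-\frac{n-\alpha}{\gamma_{1}}$ sweeps out all of $(0,1)$ as $\alpha$ runs over $(n-\gamma_{1},n)$, relabelling gives $u\in\mathscr{C}_{\text{loc}}^{0,\alpha}(\Omega_{0})$ for every $0<\alpha<1$, as claimed. The only real obstacle here is this normalization bookkeeping around the factor $p(x)$; the convexity step and the translation of the Hölder exponents are entirely routine.
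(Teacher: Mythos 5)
Your proposal follows the route the paper intends: by convexity of the integrand, a weak solution of the Euler--Lagrange equation is automatically a local minimizer, and Theorem~\ref{theorem3.1} then applies. You are in fact more careful than the paper's one-line remark, in that you correctly observe that \eqref{eq1.1} is literally the Euler--Lagrange equation of $\int_{\Omega}\frac{a(x)}{p(x)}|Dv|^{p(x)}\,dx$ rather than of \eqref{eq1.2} itself, and you then verify that the modified coefficient $a/p$ still satisfies the oscillation bound used in the proof of Theorem~\ref{theorem3.1}, the only cost being an extra $CR^{\sigma}$ summand from the H\"older continuity of $p$, which is of the same order as terms already absorbed there. This normalization bookkeeping, which the paper glosses over, is handled correctly and makes the omitted argument explicit.
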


\begin{remark}\label{remark3.3}
As mentioned in Section 1, we emphasize that in both Theorem \ref{theorem3.1} for the integral functional \eqref{eq1.2} and Corollary \ref{corollary3.2} for the $p(x)$-Laplacian PDE \eqref{eq1.1} the only restriction on the coefficient $x\mapsto a(x)$ is that it be essentially bounded and once weakly differentiable with integrability exponent $q>1$.  As was detailed in Section 1, this is contrast to the existing literature where more restrictive conditions are imposed on such a coefficient map.
\end{remark}

\textbf{Acknowledgements.} The authors would like to thank the anonymous referee for his or her careful reading of the original manuscript and for several suggestions for its improvement -- and, in particular, for pointing out the paper by Eleuteri, et al. \cite{eleuterimarcellini1}.

\end{document}